\newcolumntype{P}[1]{>{\centering\arraybackslash}p{#1}}
\newtheorem{defn}{Definition}[section]
\newtheorem{thm}[defn]{Theorem}
\newtheorem{prop}[defn]{Proposition}
\newtheorem{lem}[defn]{Lemma}
\newtheorem{cor}[defn]{Corollary}
\theoremstyle{remark}
\newtheorem{remark}[defn]{Remark}
\numberwithin{equation}{section}
\numberwithin{figure}{section}
\newcommand{\bb}{\begin{equation}}
\newcommand{\ee}{\end{equation}}
\newcommand{\matteo}[1]{{\small \color{green!80!black} [Matteo: #1]}}
\newcommand{\eps}{\varepsilon}
\newcommand{\origin}{\mathbf{o}}
\newcolumntype{?}{!{\vrule width 1.5pt }}
\newlength\savedwidth
\newcommand{\tightoverset}[2]{%
  \mathop{#2}\limits^{\vbox to -.5ex{\kern-0.75ex\hbox{$#1$}\vss}}}
\newenvironment{proofof}[1]{{\medbreak\noindent \em Proof of #1.\enspace }}{\hfill\qed\medbreak}
\newcommand\sut{\,;\;}
\newcommand{\dfn}[1]{\textbf{\textit{#1}}}
\newcommand{\MD}[1]{{{\color{green!50!black} [Matteo: #1]}}}
\newcommand{\tbc}[1]{{{\color{orange}  #1}}}
\newcommand\mob{\textnormal{\textsf{M\"ob}}}  % M\"obius group
\newcommand\HH{\mathbb{H}} % hyperbolic space
\newcommand\Unif{\mathrm{Unif}}  % uniform distribution
\newcommand\Leb{\mathrm{Leb}}
\newcommand{\bud}{\mathbf{d}}  % exponential separation
\newcommand\ST{\,;\;}
\newcommand\Kh{\widehat K}  % modified hyperbolic Poisson kernel in UHS
\newcommand\II[1]{\mathbf{1}_{#1}}  % indicator
\newcommand\Vor{\mathrm{Vor}}  % Voronoi diagram
\def\rlabel #1 #2{\begin{equation} \label{#1} #2 \end{equation}}
\def\rproof{\begin{proof}}
\def\Qed{\end{proof}}
\def\rcases#1{\begin{cases} #1 \end{cases}}
\title{\textsc{Ideal Poisson--Voronoi tessellations \\ beyond hyperbolic spaces}}
\author{Matteo \textsc{D'Achille}}
\affil{Laboratoire de Mathématiques d’Orsay, CNRS, Université Paris-Saclay, 91405, Orsay, France}
\date{\today}
\begin{document}

\maketitle

\begin{abstract}
We construct and study the ideal Poisson--Voronoi tessellation of the product of two hyperbolic planes $\mathbb{H}_{2}\times \mathbb{H}_{2}$ endowed with the $L^{1}$ norm. We prove that its law is invariant under all isometries of this space and study some geometric features of its cells. Among other things, we prove that the set of points at equal separation to any two corona points is unbounded almost surely. This is analogous to a recent result of Fr\c{a}czyk--Mellick--Wilkens for higher rank symmetric spaces.
\end{abstract}

\section{Introduction}

The main purpose of this paper is to demonstrate a further use of our abstract deterministic Theorem~\cite[Theorem 2.3]{IPVT23}. This Theorem gives sufficient conditions for the convergence of Voronoi diagrams in any boundedly compact metric space $(E,d)$, such as any complete locally compact length space by the Hopf--Rinow--Cohn--Vossen Theorem~\cite[Theorem 2.5.28]{BBI}.

\medskip
If the nuclei of the Voronoi diagram are a Poisson point process (PPP) of vanishing intensity, a random non-trivial tessellation may appear termed \dfn{ideal Poisson--Voronoi tessellation} (${\rm IPVT}$). Then~\cite[Theorem 2.3]{IPVT23} provides the basic deterministic recipe for studying IPVTs as low-intensity limits: first, study the convergence of the nuclei to the Gromov boundary of $E$ (in the sense of~\cite{Gro81}); second, identify (proto)-delays from large metric balls. %The task is simplified if some extra structure is assumed: Riemannian manifolds, etc ...

\medskip
In~\cite{IPVT23}, this recipe has been thoroughly illustrated for the IPVT of real hyperbolic space $\HH_{d}$, $d\geq 2$ and, to a lesser extent, for the $k$-regular tree $\mathbb{T}_{k}$, $k\geq 3$. Earlier investigations of low-intensity Poisson--Voronoi and Bernoulli--Voronoi tessellations respectively on $\HH_{2}$ and $\mathbb{T}_{k}$ can be found in the PhD thesis of Bhupatiraju~\cite{bhupatiraju}.
 Budzinski--Curien--Petri~\cite{BudzinskiCurienPetri} used ${\rm IPVT}(\HH_{2})$, called there ``pointless Poisson--Voronoi tessellation'', to bound from above the Cheeger constant of hyperbolic surfaces in large genus. Fr\c{a}czyk--Mellick--Wilkens~\cite{FMW} used ${\rm IPVT}(\mathcal{X})$, where $\mathcal{X}$ is either a higher rank semisimple real Lie group or the product of at least two automorphism groups of regular trees, to prove that such $\mathcal{X}$ have fixed price $1$, thus answering positively a question of Gaboriau~\cite{gaboriau2000} in these cases. Mellick~\cite{mellick2024} recently addressed indistinguishability of the cells of ${\rm IPVT}(\mathcal{X})$ for such $\mathcal{X}$, after a question that we raised in~\cite[Question 7.8]{IPVT23}.

\clearpage
%\medskip
In this paper we construct and study ${\rm IPVT}(\mathcal{M})$, where $\mathcal{M}$ is the Cartesian product $\HH_{2} \times \HH_{2}$ endowed with the $L^{1}$ metric\footnote{We could have considered the Cartesian $m$-product $(\HH_{d_{1}} \times \HH_{d_{2}}\cdots \times \HH_{d_{m}},L^{1})$, with $d_{i}\geq 2$ for all $i=1,\ldots,m$. However we preferred to stick to the case $m=2$ and $d_{1}=d_{2}=2$ for the sake of clarity.}. 
Notice that $\mathcal{M}$ is neither a $\delta$-hyperbolic space for any $\delta >0$ (due to the presence of flats) nor a symmetric space (since it doesn't support a Riemannian metric). Nevertheless, our choice of {endowing $\HH_{2} \times \HH_{2}$ with the} $L^{1}$ metric and usual properties of Poisson point processes induce an appealing product structure with the following consequence:
 {
 \begin{thm}[\textsc{Convergence towards IPVT($\mathcal{M}$), short version}]\label{thm.conv}
 
  Let $\mathbf{X}^{(\lambda)}=(X_i^{(\lambda)} \, \ST \, i\geq 1 )$ be a PPP with intensity $\lambda \cdot \mathrm{Vol}_{ \mathcal{M}}$, with $\lambda>0$. Let $\Vor(\mathbf{X}^{(\lambda)})$ be the Voronoi diagram relative to $\mathbf{X}^{(\lambda)}$.  Then the following convergence in law holds
  $$
  \Vor(\mathbf{X}^{(\lambda)}) \underset{\lambda \downarrow 0}{\overset{\rm law}{\Longrightarrow}} {\rm IPVT}(\mathcal{M}) \; ,
  $$
 where ${\rm IPVT}(\mathcal{M})$ denotes the \dfn{ideal Poisson--Voronoi tessellation} of $\mathcal{M}$.
 \end{thm}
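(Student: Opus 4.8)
The plan is to reduce the convergence statement for $\mathcal{M} = \HH_2\times\HH_2$ to the already-established machinery of \cite[Theorem 2.3]{IPVT23}, exploiting the product structure induced by the $L^1$ metric. Recall that a PPP of intensity $\lambda\cdot\Vol_{\mathcal{M}}$ on $\mathcal{M} = \HH_2\times\HH_2$ has first coordinates forming a PPP of intensity $\sqrt{\lambda}\cdot\Vol_{\HH_2}$ on the first factor and second coordinates forming an \emph{independent} PPP of intensity $\sqrt{\lambda}\cdot\Vol_{\HH_2}$ on the second factor (after an appropriate splitting of the intensity; the precise constants are bookkeeping). The $L^1$ metric means that the Voronoi cell of a nucleus $X = (x^{(1)}, x^{(2)})$ is governed by the sum $d_{\HH_2}(\cdot, x^{(1)}) + d_{\HH_2}(\cdot, x^{(2)})$, so the competition between nuclei decouples additively across the two factors. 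The strategy is therefore: (i) verify the two hypotheses of \cite[Theorem 2.3]{IPVT23} — convergence of nuclei to the Gromov boundary and identification of the (proto-)delays from large metric balls — now for the product space; (ii) conclude that $\Vor(\mathbf{X}^{(\lambda)})$ converges in law to a limiting object, which we then \emph{define} to be ${\rm IPVT}(\mathcal{M})$.

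\textbf{First} I would establish that $\mathcal{M}$ is boundedly compact: it is a complete locally compact length space (a product of two such, with the $L^1$ metric being a length metric), so by the Hopf--Rinow--Cohn--Vossen theorem cited in the excerpt, closed balls are compact, and \cite[Theorem 2.3]{IPVT23} applies in principle. \textbf{Second}, I would identify the relevant boundary. The Gromov boundary in the sense of \cite{Gro81} for the product should be described via the two factors: a sequence $X_n = (x_n^{(1)}, x_n^{(2)})$ escaping to infinity in $\mathcal{M}$ corresponds, after passing to a subsequence, to a limit point determined by the behavior of each coordinate sequence together with the asymptotic ratio of the two distances $d_{\HH_2}(\origin, x_n^{(1)})$ and $d_{\HH_2}(\origin, x_n^{(2)})$. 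Because the $L^1$ combination is used, the natural compactification records a pair of horofunctions (one per factor) plus a ``tilt'' parameter governing their relative weight. \textbf{Third}, I would compute the proto-delays: for each factor, the delay contributed by a ball of radius $R$ is exactly as computed in \cite{IPVT23} for $\HH_2$ (this is where the known hyperbolic computation is reused verbatim), and the total delay in the $L^1$ product is the sum of the per-factor delays, with the Poisson intensities $\sqrt\lambda$ in each factor. \textbf{Fourth}, I would check the tightness / non-triviality condition from \cite[Theorem 2.3]{IPVT23}: since each $\HH_2$-factor already produces a non-trivial IPVT at low intensity, the additive $L^1$ coupling preserves non-triviality, and no mass escapes.

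\textbf{The main obstacle} I anticipate is the second step: correctly setting up the boundary compactification of $\mathcal{M}$ and verifying that the low-intensity nuclei converge to it in the sense required by \cite[Theorem 2.3]{IPVT23}. Unlike $\HH_2$, whose Gromov boundary is a circle, the relevant boundary of $\mathcal{M}$ is not simply $\sph\times\sph$: one must also track the ratio in which a nucleus ``spends'' its distance-to-origin budget between the two factors, and this ratio is itself random in the limit (it will be distributed according to the relative weights of the two Poisson intensities, hence uniform-in-log or similar). Making this precise — identifying the correct topology on the space of ``ideal'' limit data, proving that the rescaled point process of nuclei converges there, and confirming that the additive $L^1$ delay structure passes to the limit continuously — is the technical heart of the argument. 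Once that is in place, the remaining pieces are either direct citations of \cite[Theorem 2.3]{IPVT23} or routine adaptations of the $\HH_2$ computations in \cite{IPVT23}, applied factor-by-factor.
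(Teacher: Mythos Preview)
Your plan rests on a false premise. A PPP of intensity $\lambda\cdot\Vol_{\mathcal{M}}$ on $\mathcal{M}=\HH_2\times\HH_2$ does \emph{not} project to independent PPPs of intensity $\sqrt{\lambda}\cdot\Vol_{\HH_2}$ on the factors: projecting onto one factor does not even give a locally finite process (every compact set of the first $\HH_2$ sits under infinitely many points), and no ``splitting of the intensity'' fixes this. The competition between nuclei therefore cannot be analysed factor-by-factor, and ``summing per-factor delays with intensities $\sqrt\lambda$'' has no meaning here.

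The paper's argument works directly in $\mathcal{M}$. One computes the $L^1$-ball volume as a convolution,
\[
\phi(r)=\int_0^r f_2(\rho)\,f_2(r-\rho)\,\mathrm d\rho=2\pi^2\bigl(r\cosh r-\sinh r\bigr)\sim \pi^2\,r\,e^{r},
\]
and the extra polynomial factor $r$ forces the proto-delay centering to be $\log\tfrac1\lambda-\log\log\tfrac1\lambda$ rather than $\log\tfrac1\lambda$; the mapping theorem then gives delays converging to a PPP of intensity $\pi^2 e^{s}\,\mathrm ds$. As for your anticipated ``main obstacle'', the outcome is the opposite of what you predict: there is no tilt parameter in the limit. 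Conditionally on $d(X_i^{(\lambda)},\origin)=r$, the split $(d_1,d_2)$ with $d_1+d_2=r$ has density proportional to $f_2(d_1)f_2(r-d_1)$, so both coordinates diverge almost surely; any nucleus whose Gromov limit lands in $(\HH_2\times\partial\HH_2)\cup(\partial\HH_2\times\HH_2)$ has infinite delay and carries no mass. Hence the effective boundary is exactly $\partial\HH_2\times\partial\HH_2$, the angles are i.i.d.\ uniform and independent of the delays, and \cite[Theorem~2.3]{IPVT23} closes the argument.
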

 
 \smallskip
Like ${\rm IPVT}(\mathbb{H}_{d})$, ${\rm IPVT}(\mathcal{M})$ is constructed from a PPP on the \dfn{corona} $\widetilde{\partial \mathcal{M}}$, which is here $\partial \mathbb{H}_{2} \times \partial \mathbb{H}_{2}$ cross $\mathbb{R}_{\geq 0}$, where $\partial \mathbb{H}_{2}$ denotes the Gromov boundary of $\mathbb{H}_{2}$ (see \Cref{f.corona} for a portrait). Informally, this holds because the nuclei are escaping at infinity at roughly the same speed in the two factors. The corona process still enjoys the factor property with a simple corona measure $\mu$, see \Cref{prop.convipvt} (which is the extended version of \Cref{thm.conv}) for details. This is reflected on simple product formulas for the exponential separation $\bud:\mathcal{M}\times  \widetilde{\partial \mathcal{M}} \to \mathbb{R}_{\geq 0}$ to any corona point (\Cref{sep.comp}) which we use for computing IPVT cells as follows. Let $N$ be the corona process. Then the cell of $(\theta,\phi,r)\in \partial \mathbb{H}_{2} \times \partial \mathbb{H}_{2}\times \mathbb{R}_{\geq 0}$ is

$$
C(\theta,\phi,r)\coloneqq \{ z : \bud\bigl(z, (\theta,\phi, r)\bigr) \leq \bud\bigl(z, (\theta',\phi', r')\bigr) \; \text{for all} \; (\theta',\phi',r') \in N \} \;,
$$

with its natural extension at Gromov boundary of $\mathcal{M}$, denoted by $\partial \mathcal{M}$. We then prove that the law of ${\rm IPVT}(\mathcal{M})$ is invariant under \emph{all} the isometries of $\mathcal{M}$ (\Cref{prop.tga}). As in~\cite[Section 5]{IPVT23}, the transitivity of this group action allows us to focus on the cell of the corona point with smallest radius to get the distributional properties of \emph{every} cell of ${\rm IPVT}(\mathcal{M})$. By this method and work of Biermé--Estrade on Poisson random balls~\cite{BEcovering}, we obtain the following result on the topology of the cells of ${\rm IPVT}(\mathcal{M})$ at $\partial \mathcal{M}$. Define the \dfn{end} of the cell $C(\theta,\phi,r)$ as the following union of circles: $\mathcal{E}(\theta,\phi)=\{(\theta,\tau_{2}) \, \ST \, \tau_{2} \in \partial\mathbb{H}_{2} \} \cup \{ (\tau_{1},\phi)\, \ST \,\tau_{1} \in \partial \mathbb{H}_{2} \}$. Then:

\begin{thm}[\textsc{Ends of cells}]\label{thm.1}
Almost surely, the points of each cell of \emph{IPVT}($\mathcal{M}$) at $\partial \mathcal{M}$ lie in its end.
\end{thm}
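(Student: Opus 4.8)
I would argue as follows. By the isometry-invariance of the law of ${\rm IPVT}(\mathcal M)$ (\Cref{prop.tga}) combined with a re-rooting argument --- either directly via the Mecke equation for the corona process, or, as in \cite[Section~5]{IPVT23}, by reducing distributional statements to the cell of the corona point of smallest radius --- it is enough to fix a single corona point $x_{0}=(\theta_{0},\phi_{0},r_{0})$, regard the remaining corona process $N$ as an independent copy of itself, and prove that almost surely the end at $\partial\mathcal M=\partial\HH_{2}\times\partial\HH_{2}$ of $C_{0}:=C(\theta_{0},\phi_{0},r_{0})$ is contained in $\mathcal E(\theta_{0},\phi_{0})$. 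As there are only countably many corona points, a union bound then yields the statement for every cell simultaneously.

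The first step is to make $C_{0}$ at the boundary explicit. Inserting the product formula for $\bud$ of \Cref{sep.comp}, together with the standard relation $\beta_{\omega}(z)=d(\origin,z)-2(z\mid\omega)_{\origin}$ between Busemann functions and Gromov products on $\HH_{2}$ (valid up to a universal additive constant), into the definition of the cell, one checks that a boundary point $(\tau_{1},\tau_{2})\in\partial\HH_{2}\times\partial\HH_{2}$ belongs to the natural extension of $C_{0}$ to $\partial\mathcal M$ if and only if
$$(\theta'\mid\tau_{1})_{\origin}+(\phi'\mid\tau_{2})_{\origin}\ \le\ (\theta_{0}\mid\tau_{1})_{\origin}+(\phi_{0}\mid\tau_{2})_{\origin}+\tfrac12\bigl(\psi(r')-\psi(r_{0})\bigr)\qquad\text{for all }(\theta',\phi',r')\in N,$$
where $\psi$ is the increasing function through which $\bud$ depends on the radius. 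When $\tau_{1}=\theta_{0}$ or $\tau_{2}=\phi_{0}$ the right-hand side is $+\infty$, so the condition holds automatically and $\mathcal E(\theta_{0},\phi_{0})$ always lies in (the extension of) $C_{0}$; the content of the Theorem is the opposite inclusion, i.e.\ that almost surely \emph{no} pair $(\tau_{1},\tau_{2})$ with $\tau_{1}\neq\theta_{0}$ and $\tau_{2}\neq\phi_{0}$ solves this system. (The coincidences $\theta'=\tau_{1}$ or $\phi'=\tau_{2}$ among corona points are $\mu$-null and are discarded throughout.)

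The main step is to turn the negation of this system into a covering problem on the circle. Fix a visual metric $\rho$ on $\partial\HH_{2}\cong\mathbb S^{1}$ normalized so that $(\eta\mid\zeta)_{\origin}=-\log\rho(\eta,\zeta)+O(1)$, and exhaust $\partial\mathcal M\setminus\mathcal E(\theta_{0},\phi_{0})$ by compact sets $K$ on each of which $\rho(\theta_{0},\tau_{1})\wedge\rho(\phi_{0},\tau_{2})\ge\eps_{K}>0$. Negating the system, passing to the visual metric, and using this lower bound on $K$, it suffices to show that almost surely every $(\tau_{1},\tau_{2})\in K$ admits a corona point $(\theta',\phi',r')\in N$ with $\rho(\theta',\tau_{1})\,\rho(\phi',\tau_{2})<c_{K}\,e^{-\psi(r')/2}$ for a suitable $c_{K}>0$. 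Since $\rho(\theta',\tau_{1})$ is bounded by the diameter, this ``hyperbola-shaped'' region around $(\theta',\phi')$ contains the vertical strip $\partial\HH_{2}\times B_{\rho}\!\bigl(\phi',\,c_{K}'e^{-\psi(r')/2}\bigr)$, so it is enough that the random arcs $B_{\rho}(\phi',c_{K}'e^{-\psi(r')/2})$ cover $\partial\HH_{2}$ a.s.; here $(\phi',r')$ runs over the projection of $N$ to the second factor, which by the factor property (\Cref{prop.convipvt}) is again a Poisson process whose intensity factors as a boundary measure times $\kappa$. Feeding the explicit $\kappa$ and $\psi$ of \Cref{prop.convipvt} and \Cref{sep.comp} into the ordered lengths of these arcs, one finds that they decay only like a negative power of the index, so Shepp's criterion for covering the circle by random arcs --- equivalently the Poisson random balls covering criterion of Biermé--Estrade \cite{BEcovering} --- is met and $\partial\HH_{2}$ is covered a.s.\ The vertical strips then cover $\partial\HH_{2}\times\partial\HH_{2}\supseteq K$, so a.s.\ no point of $K$ lies in the extension of $C_{0}$; letting $K$ exhaust $\partial\mathcal M\setminus\mathcal E(\theta_{0},\phi_{0})$ and then taking the union over corona points finishes the proof.

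I expect the crux to be this last reduction: the grains $\{\,\rho(\theta',\cdot)\,\rho(\phi',\cdot)<s'\,\}$ are two-dimensional and strongly anisotropic, and one must extract from them genuine one-dimensional arcs --- via their thin directions --- whose law falls exactly into a regime where a Shepp/Biermé--Estrade covering theorem applies; this is precisely where the fine shape of the corona measure $\mu$ and of the exponential separation $\bud$ enters, and it is the step that invokes \cite{BEcovering}. A more routine but non-empty point is arranging the initial re-rooting (via the Mecke equation) so as to obtain the conclusion simultaneously for all of the countably many cells rather than for just one, and verifying that the various boundary coincidences that must be excluded are indeed negligible.
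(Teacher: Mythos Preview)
Your argument is correct and reaches the same conclusion by a genuinely different reduction. The paper sends $(\theta_{0},\phi_{0})$ to $(\infty,\infty)$ in the product of upper half-planes, so that $(\partial\HH_{2}\times\partial\HH_{2})\setminus\mathcal E(\theta_{0},\phi_{0})$ becomes the plane $\mathcal P\cong\mathbb R^{2}$; there the region in which a corona point $(\theta',\phi',r')$ beats $(\theta_{0},\phi_{0},r_{0})$ is an explicit \emph{hyperbolic cross} $\{(x-a)^{2}(y-b)^{2}\le c\}$, into which the paper inscribes the largest Euclidean \emph{disk} (\Cref{lem.kl}) and then applies the two-dimensional Bierm\'e--Estrade criterion to cover all of $\mathcal P$ at once. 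You instead stay on the compact torus $\partial\HH_{2}\times\partial\HH_{2}$, use that the visual metric is bounded by its diameter $D$ to inscribe in each hyperbola-shaped region a full \emph{vertical strip} $\partial\HH_{2}\times B_{\rho}\bigl(\phi',c_{K}D^{-1}r'^{-1/2}\bigr)$, and thereby reduce to a one-dimensional Shepp covering of the second circle by arcs of length $\asymp i^{-1/2}$; the criterion is indeed met for every positive constant, so the exhaustion over $K$ goes through. Your route is lighter --- it bypasses \Cref{lem.kl} and trades a $2$D Poisson-ball covering for a $1$D arc covering --- at the cost of the exhaustion by compacts with $K$-dependent constants, whereas the half-plane chart treats the whole complement of the end in a single shot and yields the concrete deposition picture of \Cref{fig.rain}. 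It is worth noting that your strip inclusion genuinely uses the compactness of $\partial\HH_{2}$: in the paper's $\mathbb R^{2}$ chart the hyperbolic crosses contain no strip of positive width, which is exactly why the paper is forced to pass through inscribed disks.
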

}
\medskip
\noindent
{
Finally, motivated in part by work of Fr\c{a}czyk--Mellick--Wilkens~\cite{FMW}, we study the separation to any corona point seen from a point $(X_{t},Y_{t})\in \mathcal{M}$ converging (in the Gromov sense) when $t\to \infty$ to either of the two intersections of the ends of two cells of {IPVT}($\mathcal{M}$).  We get the following results:
}
\begin{thm}[\textsc{A.s.~unbounded set at equal separation and limit separation process}]\label{thm.2}$_{}$\\
\begin{itemize}
\vspace{-10pt}
\item[(i)] Almost surely, the set of points $z \in \mathcal{M}$ at equal separation from any corona points is unbounded;
\item[(ii)]  The separation seen from $(X_{t},Y_{t})$ can be rescaled so to converge in distribution when $t\to \infty$ to a stationary Poisson process $S_{\infty}(\eta,\xi)$ over $\mathbb{R}\times \mathbb{R}\times \mathbb{R}_{\geq 0}$ of intensity measure
$$
\nu_{\infty}(\eta,\xi) = 4\pi^{2}\frac{\Re(\eta)}{|\eta+i \hat{\theta}|^{2}} \II{\Re(\eta)\geq 0} \;\mathrm{d}\hat{\theta} \frac{\Re(\xi)}{|\xi+i \hat{\phi}|^{2}}\II{\Re(\xi)\geq 0} \;\mathrm{d}\hat{\phi}\,  \II{y\geq 0}\mathrm{d}y  \,,
$$
where $(\eta,\xi)\in \mathbb{C}\times \mathbb{C}$ and $\Re(\eta)$ denotes the real part of $z \in \mathbb{C}$.
\end{itemize}
\end{thm}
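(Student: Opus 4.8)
The plan is to prove (ii) first and to read off (i) from it. Both parts revolve around the same ingredient: the path $(X_{t},Y_{t})$ and the flat it lives in. Fix two cells $C(\theta_{1},\phi_{1})$, $C(\theta_{2},\phi_{2})$ of $\mathrm{IPVT}(\mathcal{M})$, coming from corona points $P_{1},P_{2}$ of $N$; by the transitivity in \Cref{prop.tga} we may take $P_{1}$ to be the corona point of smallest radius, and by the same proposition we may place $\theta_{1}=\phi_{2}=\infty$ in the upper half-plane model of the respective $\mathbb{H}_{2}$-factor. Almost surely $\theta_{1}\neq\theta_{2}$, $\phi_{1}\neq\phi_{2}$, and no further corona point shares any of these four boundary coordinates, since the corona measure $\mu$ (\Cref{prop.convipvt}) charges no coordinate hyperplane. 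Let $\gamma_{1}$ be the geodesic of $\mathbb{H}_{2}$ from $\theta_{2}$ to $\theta_{1}$, $\gamma_{2}$ the geodesic from $\phi_{1}$ to $\phi_{2}$, and $F=\{(\gamma_{1}(s),\gamma_{2}(u)):s,u\in\mathbb{R}\}\subset\mathcal{M}$, an isometrically embedded flat $(\mathbb{R}^{2},L^{1})$. By the product formula for the exponential separation (\Cref{sep.comp}), $\log\bud$ from a point of $F$ to any corona point splits as (a function of $s$) $+$ (a function of $u$) $+$ (a constant), and since $\theta_{1},\theta_{2}$ are the two endpoints of $\gamma_{1}$ and $\phi_{1},\phi_{2}$ those of $\gamma_{2}$, the two pieces are \emph{exactly affine} for $P_{1}$ and for $P_{2}$. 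Hence the bisector $\{z\in F:\bud(z,P_{1})=\bud(z,P_{2})\}$ is a straight line $L$ of $F$, running from $\zeta'_{\infty}:=(\theta_{2},\phi_{1})$ to $\zeta_{\infty}:=(\theta_{1},\phi_{2})$ --- the two points of $\mathcal{E}(\theta_{1},\phi_{1})\cap\mathcal{E}(\theta_{2},\phi_{2})$ --- and along $L$ the common separation $\bud(\,\cdot\,,P_{1})=\bud(\,\cdot\,,P_{2})$ is constant, say equal to $\tau_{0}$. Let $(X_{t},Y_{t})$ be the arclength parametrisation of $L$ heading towards $\zeta_{\infty}$.

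For (ii): as $(X_{t},Y_{t})\to\zeta_{\infty}$, the only corona points that can be reached within a bounded separation are those whose first boundary coordinate is at visual distance of order $e^{-t}$ from $\theta_{1}$ and whose second coordinate is at visual distance of order $e^{-t}$ from $\phi_{2}$, with radius in the matching window. I will therefore apply the mapping theorem for Poisson point processes to the change of variables that, in each factor, rescales the boundary coordinate to $\hat{\theta}\in\mathbb{R}$ (resp.\ $\hat{\phi}\in\mathbb{R}$) and couples it with a rescaling of the radius into a complex parameter $\eta$ (resp.\ $\xi$) in the right half-plane --- the familiar identification of ``boundary shadow plus horoball depth'' with a point of $\mathbb{H}_{2}$ realised as a half-plane --- the leftover scale of the radii furnishing $y$. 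Pushing the corona measure $\mu$ through this map and letting $t\to\infty$, I expect the intensity to converge vaguely to $\nu_{\infty}$, which for Poisson processes yields the asserted convergence in distribution. The density $\Re(\eta)/|\eta+i\hat{\theta}|^{2}$ that emerges is, up to the explicit constant, the Poisson kernel of $\mathbb{H}_{2}$ at the boundary point $\hat{\theta}$ seen from $\eta$ --- the harmonic weight with which an escaping point resolves boundary directions; the constraint $\Re(\eta)\ge 0$ is built into the half-plane realisation, and the constraint $y\ge 0$ comes from the fact that $L$ is the $P_{1},P_{2}$-bisector, so no other corona point is reached strictly sooner than $P_{1},P_{2}$ far out along $L$. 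Finally, asymptotic self-similarity of $\mathbb{H}_{2}$ around $\theta_{1}$ (resp.\ $\phi_{2}$) makes the limit independent of the parametrisation of $L$ and invariant under the product of the stabilisers of $\theta_{1}$ and of $\phi_{2}$ in $\mathrm{Isom}(\mathbb{H}_{2})^{2}$, a group containing a copy of $\mathbb{R}$; hence $S_{\infty}$ is stationary. I expect the main obstacle to be this change of variables --- pinning down the correct joint rescaling of boundary coordinate and radius, computing the Jacobian, and securing the constants in $\nu_{\infty}$ --- together with establishing the $y\ge 0$ support, i.e.\ that far out along $L$ no corona point other than $P_{1},P_{2}$ intrudes.

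For (i): the support statement $\operatorname{supp}\nu_{\infty}\subseteq\{y\ge 0\}$ from (ii) says precisely that, for all $t$ large enough, every corona point other than $P_{1},P_{2}$ has separation from $(X_{t},Y_{t})$ at least $\bud((X_{t},Y_{t}),P_{1})=\bud((X_{t},Y_{t}),P_{2})=\tau_{0}$; equivalently $(X_{t},Y_{t})$ then lies on the wall $C(\theta_{1},\phi_{1})\cap C(\theta_{2},\phi_{2})$. Since $(X_{t},Y_{t})\to\zeta_{\infty}\in\partial\mathcal{M}$ (and, running $L$ backwards, the symmetric statement gives points of the wall tending to $\zeta'_{\infty}$), this wall is unbounded, proving (i); consistently with \Cref{thm.1}, it can only accumulate at $\zeta_{\infty}$ and $\zeta'_{\infty}$. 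The delicate point --- the same one flagged for (ii) --- is to exclude intrusion by a third corona point at arbitrarily large distance along $L$; concretely, one shows that only finitely many corona points come within separation $\tau_{0}+1$ of the whole ray, by combining \Cref{thm.1} (a cell accumulating at $\zeta_{\infty}$ must have first boundary coordinate $\theta_{1}$ or second boundary coordinate $\phi_{2}$, impossible almost surely for $P'\neq P_{1},P_{2}$) with the local finiteness of the corona process, in the spirit of the Poisson random balls covering estimates of Biermé--Estrade used for \Cref{thm.1}.
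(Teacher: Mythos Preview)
Your setup contains a correct and elegant proof of (i) that you then fail to use. The statement of (i) concerns the \emph{no man's land} $\mathrm{NML}(P_{1},P_{2})=\{z:\bud(z,P_{1})=\bud(z,P_{2})\}$, not the wall $C(\theta_{1},\phi_{1})\cap C(\theta_{2},\phi_{2})$. You correctly observe that on the flat $F$ the functions $\log\bud(\cdot,P_{1})$ and $\log\bud(\cdot,P_{2})$ are affine in $(s,u)$ with opposite gradients, so that the bisector $L$ is a full line (isometric to $\mathbb{R}$) along which the common separation is the constant $\tau_{0}$. Hence $L\subset\mathrm{NML}(P_{1},P_{2})$ is unbounded, and since $\theta_{1}\neq\theta_{2}$, $\phi_{1}\neq\phi_{2}$ a.s., this already proves (i). This is in fact simpler than the paper's route, which chooses a \emph{fixed} ray $(X_{t},Y_{t})=\rho_{E}(t)\cdot(1,1)$ that is generically \emph{not} on the bisector, shows that a $\delta$-ball around $(X_{t},Y_{t})$ meets $\mathrm{NML}(P_{1},P_{2})$ with probability bounded below uniformly in $t$, and concludes via Jeulin's Lemma.

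Your attempt to deduce (i) from (ii) does not work. The support constraint $\{y\ge 0\}$ in $\nu_{\infty}$ records only that the separation is nonnegative; it carries no information about comparison with $P_{1},P_{2}$. In particular it does \emph{not} imply that for large $t$ every $P'\neq P_{1},P_{2}$ has $\bud((X_{t},Y_{t}),P')\ge\tau_{0}$: the paper's own limit computation shows that the rescaled separations form a rate-$4\pi^{2}$ PPP on $[0,\infty)$, so there are always corona points with small separation from $(X_{t},Y_{t})$, and $L$ is not eventually contained in the wall. Convergence in law of the separation process cannot yield this kind of almost-sure statement about the prelimit anyway.

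For (ii) you have also misread the roles of the variables. The limiting process $S_{\infty}(\eta,\xi)$ lives on $(\hat\theta,\hat\phi,y)\in\mathbb{R}\times\mathbb{R}\times\mathbb{R}_{\ge0}$; here $(\hat\theta,\hat\phi)$ are the rescaled boundary coordinates of the corona points and $y$ is the separation value, while $(\eta,\xi)\in\mathbb{C}^{2}$ are \emph{parameters} encoding the rescaled observation point $z=(1-\eps\eta,\,1-\eps\xi)$ near $(X_{t},Y_{t})$ --- they are not built out of the corona radii. The paper's argument proceeds in two steps: first, an explicit expansion of the separation formula at $z=(X_{t},Y_{t})$ itself gives the limit intensity at the origin (a product of Cauchy densities times Lebesgue in $y$); second, the transitivity of $\mathrm{Isom}_{1}$ together with the transformation rule $K(\varphi(z),\varphi(\theta))=K(z,\theta)/|\varphi'(\theta)|$ transports this to a general nearby $z$, which is what produces the factors $\Re(\eta)/|\eta+i\hat\theta|^{2}$ and $\Re(\xi)/|\xi+i\hat\phi|^{2}$. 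Your ``boundary shadow plus horoball depth'' picture is pointing at the right kernel, but attached to the wrong variables.
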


\begin{remark}
Item (i) in \Cref{thm.2} follows by an application of Jeulin's Lemma~\cite[Proposition 4]{jeulin1982} in the neighborhood of $(X_{t},Y_{t})$. This result is the analogous of~\cite[Theorem 6.1]{FMW}, which holds for the rank $2$ symmetric space $\mathcal{X}=(\mathbb{H}_{2}\times \mathbb{H}_{2},L^{2})$ endowed with its natural Riemannian structure. Albeit $\mathcal{M}$ and $\mathcal{X}$ have the same isometry group (see~\Cref{lem.isomp})  and homeomorphic coronas (compare~\Cref{prop.convipvt} and~\cite[Section 4]{FMW}), it is currently unclear whether further (topological, geometrical, distributional) connections between the cells of ${\rm IPVT}(\mathcal{M})$ and ${\rm IPVT}(\mathcal{X})$ exist. I thank the authors of~\cite{FMW} for explaining their work to me. %This Theorem states that any two cells of ${\rm IPVT}(\mathcal{X})$ share a.s.~an unbounded boundary (called there ``wall'').
\end{remark}

\begin{figure}[!hbtp]
\centering
\begin{minipage}{.45\textwidth}
\begin{overpic}[width=\textwidth]{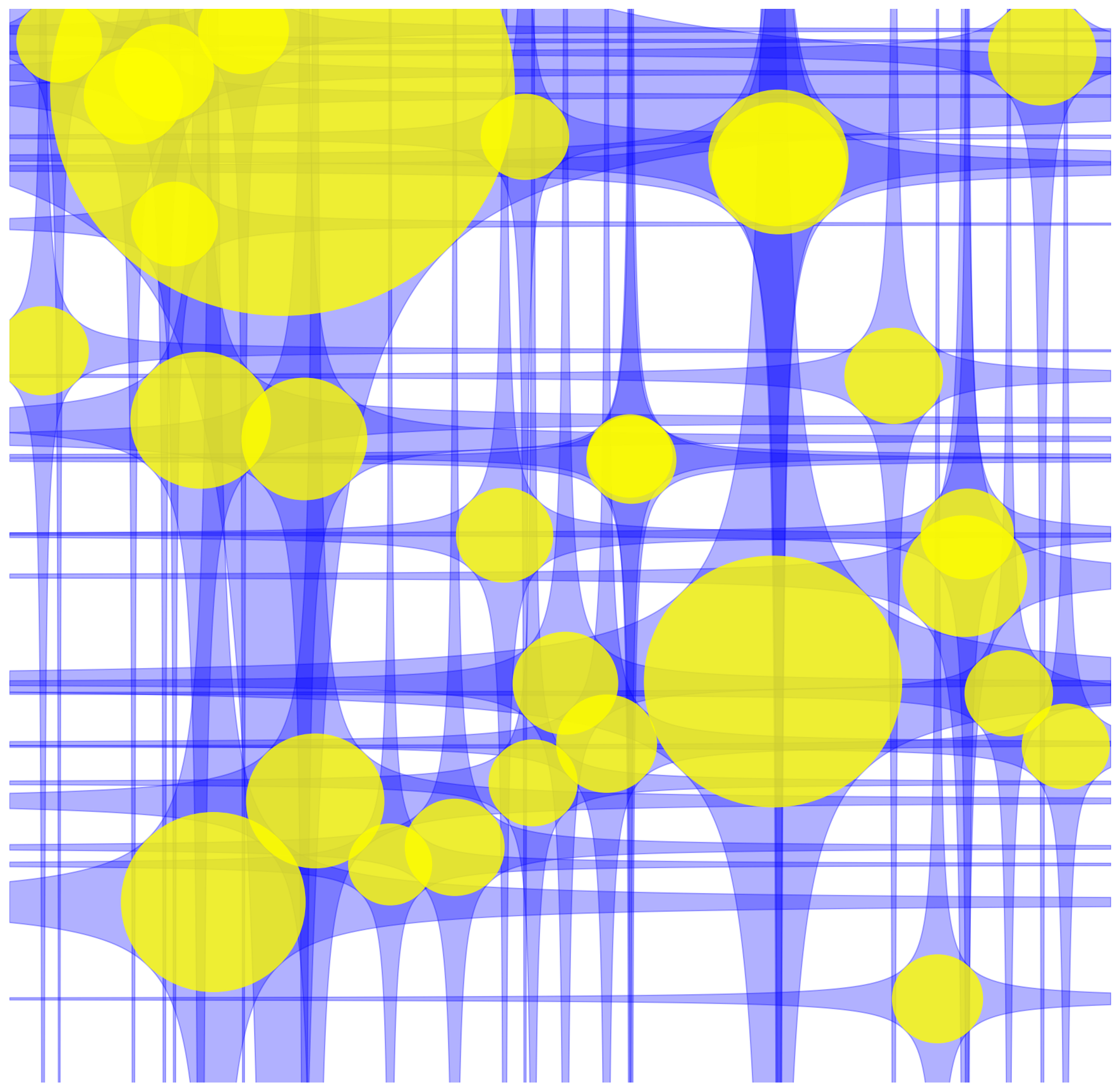}

\end{overpic}   
     \end{minipage}
     \hspace{2pt}
     \begin{minipage}{.45\textwidth}
\begin{overpic}[width=\textwidth]{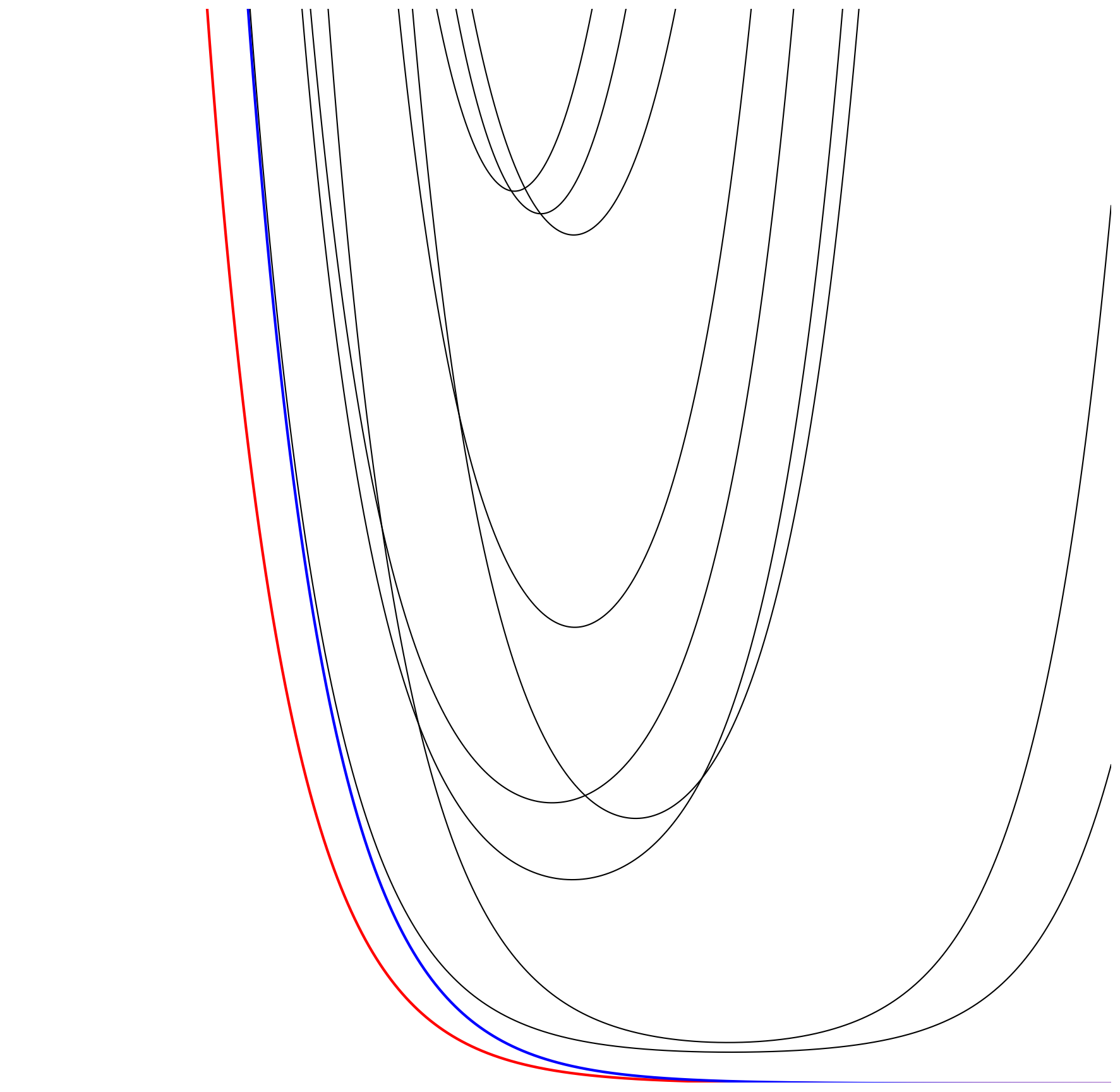}
 \put (50,-2) {{\small{$x$}}}
\end{overpic}   
     \end{minipage}
    \caption{Left: the first 30 points of the deposition model of hyperbolic crosses appearing in the proof of \Cref{thm.1}. Right: the first 10 points of the separation field $S_{\infty}(e^{-x},e^{-x})$ appearing in \Cref{thm.2} plotted against $x$. The red and blue lines represent respectively the separation from the first and second corona points.}
    \label{fig.rain}
\end{figure}

 {Lastly, we report the following easy corollary of an argument used in the proof of \Cref{thm.2}. }
Let $(\Theta_{1},\Phi_{1},R_{1})$ and $(\Theta_{2},\Phi_{2},R_{2})$ be the first and second corona points (when ranked by increasing radii), and let $\mathcal{C}_{2,2}$ be the cell of $(\Theta_{1},\Phi_{1},R_{1})$ in ${\rm IPVT}(\mathcal{M})$. Then:
\begin{cor}[\textsc{Tie-break at infinity}]\label{cor.xuyv}
Let $U$ be a ${\rm Unif}([0,1])$ random variable and let $B_{1},B_{2}$ be two ${\rm Beta}\left(\frac{1}{2},\frac{1}{2}\right)$ random variables, $U,B_{1},B_{2}$ all pairwise independent. Define $Z\overset{\rm law}{=}U\frac{B_{1}}{B_{2}}$. Then
$$
\mathbb{P}((\Theta_{1},\Phi_{2})\in \mathcal{C}_{2,2})=\mathbb{P}(Z\leq 1)=\frac{1}{2}+\frac{2}{\pi^{2}}=0.70264^{+}\; .
$$
\end{cor}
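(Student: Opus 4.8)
The plan is to reduce $\{(\Theta_{1},\Phi_{2})\in\mathcal{C}_{2,2}\}$ to a comparison of the extended exponential separations at $(\Theta_{1},\Phi_{2})$ to only the first two corona points, then to identify the laws involved and evaluate one integral. For the reduction, note that by \Cref{thm.1} a boundary point $(\Theta_{1},\Phi_{2})\in\partial\mathcal{M}$ can belong only to cells whose end contains it, and $(\Theta_{1},\Phi_{2})\in\mathcal{E}(\theta,\phi)$ exactly when $\theta=\Theta_{1}$ or $\phi=\Phi_{2}$. Since the $\theta$- and $\phi$-marginals of the corona process are non-atomic, almost surely the only corona points with this property are $(\Theta_{1},\Phi_{1},R_{1})$ and $(\Theta_{2},\Phi_{2},R_{2})$. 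Hence $(\Theta_{1},\Phi_{2})$ lies in $\mathcal{C}_{2,2}$ or in the cell of $(\Theta_{2},\Phi_{2},R_{2})$, decided by which of $\bud\bigl((\Theta_{1},\Phi_{2}),(\Theta_{1},\Phi_{1},R_{1})\bigr)$ and $\bud\bigl((\Theta_{1},\Phi_{2}),(\Theta_{2},\Phi_{2},R_{2})\bigr)$ is smaller.

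To evaluate these two extended separations I would use the product formula of \Cref{sep.comp}, which for the purpose of cell comparisons reads $\bud\bigl(z,(\theta,\phi,r)\bigr)=r\,\mathrm{e}^{\beta_{\theta}(z_{1})}\mathrm{e}^{\beta_{\phi}(z_{2})}$ (with $\beta_{\xi}$ the Busemann function of $\mathbb{H}_{2}$ at $\xi$, normalised at $\origin$), and compute along the equal-speed ray $(X_{t},Y_{t})=(\gamma_{\Theta_{1}}(t),\gamma_{\Phi_{2}}(t))$ of \Cref{thm.2}, $\gamma_{\xi}$ being the unit-speed geodesic ray from $\origin$ towards $\xi$. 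Here $\beta_{\Theta_{1}}(\gamma_{\Theta_{1}}(t))=-t$, while for $\eta\neq\xi$ one has the standard asymptotic $\beta_{\eta}(\gamma_{\xi}(t))=t+2\log\sin\!\bigl(\tfrac12\angle_{\origin}(\xi,\eta)\bigr)+o(1)$; substituting, the separation to $(\Theta_{1},\Phi_{1},R_{1})$ tends to $R_{1}\sin^{2}\!\bigl(\tfrac12\angle_{\origin}(\Phi_{1},\Phi_{2})\bigr)$, the one to $(\Theta_{2},\Phi_{2},R_{2})$ tends to $R_{2}\sin^{2}\!\bigl(\tfrac12\angle_{\origin}(\Theta_{1},\Theta_{2})\bigr)$, and every other separation grows like $\mathrm{e}^{2t}$ (any common normalisation of $\mathrm{e}^{\beta}$ or of the radial intensity drops out of the comparison). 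Therefore, almost surely,
$$
(\Theta_{1},\Phi_{2})\in\mathcal{C}_{2,2}\iff R_{1}\sin^{2}\!\bigl(\tfrac12\angle_{\origin}(\Phi_{1},\Phi_{2})\bigr)\ \leq\ R_{2}\sin^{2}\!\bigl(\tfrac12\angle_{\origin}(\Theta_{1},\Theta_{2})\bigr).
$$
I expect this step to be the real point: the extended separation at $(\Theta_{1},\Phi_{2})$ is an indeterminate $0\cdot\infty$ in the raw formula, and one must read off from the analysis behind \Cref{thm.2} that the correct (isometry-equivariant) resolution is exactly the equal-speed limit above.

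Next, by the factor property of the corona process (\Cref{prop.convipvt}) the boundary coordinates $\Theta_{1},\Phi_{1},\Theta_{2},\Phi_{2}$ are i.i.d.\ uniform on $\partial\mathbb{H}_{2}\cong\mathbb{S}^{1}$ and independent of the radial coordinates, whose increasing enumeration $R_{1}<R_{2}<\cdots$ is that of a homogeneous Poisson process on $\mathbb{R}_{\geq 0}$. Consequently $\angle_{\origin}(\Theta_{1},\Theta_{2})$ and $\angle_{\origin}(\Phi_{1},\Phi_{2})$ are independent and uniform on $[0,\pi]$, so $B_{2}:=\sin^{2}\!\bigl(\tfrac12\angle_{\origin}(\Theta_{1},\Theta_{2})\bigr)$ and $B_{1}:=\sin^{2}\!\bigl(\tfrac12\angle_{\origin}(\Phi_{1},\Phi_{2})\bigr)$ are independent ${\rm Beta}\!\left(\tfrac12,\tfrac12\right)$ variables, while $U:=R_{1}/R_{2}\overset{\rm law}{=}E_{1}/(E_{1}+E_{2})\sim{\rm Unif}([0,1])$ for i.i.d.\ ${\rm Exp}(1)$ variables $E_{1},E_{2}$, independent of $(B_{1},B_{2})$. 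The event above becomes $\{U B_{1}/B_{2}\leq1\}=\{Z\leq1\}$ with $Z\overset{\rm law}{=}U B_{1}/B_{2}$ as in the statement (the two Beta factors being exchangeable, their labelling is immaterial).

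Finally, I would compute $\mathbb{P}(Z\leq1)$ directly: conditioning on $(B_{1},B_{2})$, integrating out $U$, and using exchangeability,
$$
\mathbb{P}(Z\leq1)=\mathbb{P}(B_{1}\leq B_{2})+\mathbb{E}\!\left[\frac{B_{2}}{B_{1}}\mathbf{1}_{B_{1}>B_{2}}\right]=\frac12+\frac12\,\mathbb{E}\!\left[\frac{\min(B_{1},B_{2})}{\max(B_{1},B_{2})}\right],
$$
and, writing $B_{i}=\sin^{2}\Psi_{i}$ with $\Psi_{1},\Psi_{2}\sim{\rm Unif}([0,\tfrac{\pi}{2}])$ i.i.d.,
$$
\mathbb{E}\!\left[\frac{\min(B_{1},B_{2})}{\max(B_{1},B_{2})}\right]=\frac{8}{\pi^{2}}\int_{0}^{\pi/2}\frac{1}{\sin^{2}\psi}\left(\int_{0}^{\psi}\sin^{2}u\,\mathrm{d}u\right)\mathrm{d}\psi=\frac{4}{\pi^{2}}\Bigl[-\psi\cot\psi\Bigr]_{0}^{\pi/2}=\frac{4}{\pi^{2}},
$$
using $\tfrac{\mathrm{d}}{\mathrm{d}\psi}\bigl(-\psi\cot\psi\bigr)=\tfrac{\psi}{\sin^{2}\psi}-\cot\psi$ and $\psi\cot\psi\to1$ as $\psi\to0^{+}$. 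This yields $\mathbb{P}(Z\leq1)=\tfrac12+\tfrac{2}{\pi^{2}}$, as claimed.
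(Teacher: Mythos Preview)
Your proof is correct and follows essentially the same route as the paper: you approach $(\Theta_{1},\Phi_{2})$ along the equal-speed diagonal ray, observe that only the separations to the first two corona points stabilise (all others blow up), and then identify $R_{1}/R_{2}\sim\mathrm{Unif}([0,1])$ and $\sin^{2}(\tfrac12\angle)\sim\mathrm{Beta}(\tfrac12,\tfrac12)$ exactly as the paper does. The only cosmetic differences are that you phrase the kernel asymptotics via Busemann functions rather than via the explicit formula \eqref{eq.fol}, you add a conceptual justification (through \Cref{thm.1}) for why only two corona points compete, and you carry out the final integral $\mathbb{P}(Z\leq 1)=\tfrac12+\tfrac{2}{\pi^{2}}$ in full, whereas the paper leaves it as ``straightforward computation''.
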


\medskip
\noindent
{\bf Plan of the paper.} \Cref{sec.basics} provides the basics on ${\rm IPVT}(\mathcal{M})$, namely convergence towards ${\rm IPVT}(\mathcal{M})$ in the low-intensity limit (\Cref{prop.convipvt}) and the formulas for computing exponential separation using products of Poisson kernels in two models of $\mathcal{M}$ (\Cref{sep.comp}). \Cref{sec.tga} contains a characterization of the isometry group of $\mathcal{M}$ (\Cref{lem.isomp}). \Cref{sep.comp} and \Cref{lem.isomp} imply that the law of ${\rm IPVT}(\mathcal{M})$ is invariant under the (extended) action of \emph{all} isometries of $\mathcal{M}$ (\Cref{prop.tga}). 
 \Cref{sec.depmod} contains the proof of \Cref{thm.1}; \Cref{thm.2} is proven in \Cref{sec.lsfptt} alongside \Cref{cor.xuyv}.

\medskip

\noindent \textbf{Acknowledgements}. I thank Nicolas Curien for inspiring this work and for his constant support. Many thanks to Bram Petri for permitting the inclusion of his proof of \Cref{lem.isomp}, to Guillaume Blanc and Meltem Ünel for useful conversations  and to Ali Khezeli for a careful reading of the manuscript. I am supported by the ERC Consolidator Grant SuperGRandMA (Grant No.~101087572).

\section{Basics of IPVT($\mathcal{M}$)}\label{sec.basics}

In this section we follow {rather} closely~\cite[Section 3]{IPVT23}, to which the reader is referred. Let $\HH_{2}$ be the hyperbolic plane endowed with hyperbolic distance $\mathrm{d}_{\mathbb{H}_2}$ and let $\partial \HH_{2}$ denote its Gromov boundary (homeomorphic to the unit circle $\mathbb{S}_{1}$ in the Poincaré disk model). Denote by $\mathcal{M}$ the Cartesian product $E=\HH_{2} \times \HH_{2}$ equipped with the $L^{1}$ distance $d: E\times E \to \mathbb{R}_{\geq 0}$ defined, for all $x=(x_{1},x_{2}), y=(y_{1},y_{2}) \in E$, by
\begin{equation}\label{def.dist}
d(x,y)\overset{\rm def}{=}\mathrm{d}_{\mathbb{H}_2}(x_{1},y_{1})+\mathrm{d}_{\mathbb{H}_2}(x_{2},y_{2}) \; ,
\end{equation}
and endow it with the product volume measure $\mathrm{Vol}_{ \mathcal{M}}$.
Denote the origin of $\mathcal{M}$ by $\origin=(\origin_{\HH_{2}},\origin_{\HH_{2}})$. % The lesson is that both the two ingredients of the IPVT construction are important: metric spaces with the same Gromov boundary can have very different IPVT (think at H2 vs R2) depending on their metric properties (think at H2xH2 endowed with L1 distance and H4). It is crucial to specify the distance, and the IPVT is not invariant under quasi-isometries (whereas the Gromov boundary is!!).

\begin{thm}[\textsc{Convergence towards IPVT($\mathcal{M}$), extended version}]\label{prop.convipvt}
Let $\mathbf{X}^{(\lambda)}=(X_i^{(\lambda)} \, \ST \, i\geq 1 )$ be a PPP of intensity measure $\lambda \cdot \mathrm{Vol}_{ \mathcal{M}}$, where the points of $\mathbf{X}^{(\lambda)}$ are ranked by increasing values of $d(X_i^{(\lambda)},\origin)$. Then the following convergence in law holds
$$
\Vor(\mathbf{X}^{(\lambda)}) \underset{\lambda \downarrow 0}{\overset{\rm law}{\Longrightarrow}}\mathrm{Vor}( \boldsymbol{\Theta}, \boldsymbol{\Phi}, \mathbf{D}) \; ,
$$
where $\boldsymbol{\Theta}=(\Theta_{1},\ldots)$ and $\boldsymbol{\Phi}=(\Phi_{1},\ldots)$ are i.i.d.~uniform on $\partial\mathbb{H}_2 \times \partial\mathbb{H}_2$ and $\mathbf{D}=(D_{1},\ldots)$ is s.t.~$\left(\pi^{2}e^{D_{i}}\right)_{i\geq 1}$ is a rate-1 homogeneous PPP on $\mathbb{R}_{\geq 0}$. The processes $\boldsymbol{\Theta}$,  $\boldsymbol{\Phi}$, $\mathbf{D}$ are all pairwise independent. We call $\mathrm{Vor}( \boldsymbol{\Theta}, \boldsymbol{\Phi}, \mathbf{D})$ the \dfn{ideal Poisson–Voronoi tessellation} of $\mathcal{M}$ and denote it by $\text{\rm IPVT}(\mathcal{M})$.

\end{thm}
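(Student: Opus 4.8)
The plan is to invoke the abstract convergence criterion \cite[Theorem 2.3]{IPVT23}, which applies here because $\mathcal{M}$, being a complete locally compact length space, is boundedly compact (by Hopf--Rinow--Cohn--Vossen). That criterion reduces the statement to two verifications: first, that the nuclei $X_i^{(\lambda)}$ escape, as $\lambda\downarrow 0$, to a limiting point process on the corona $\widetilde{\partial\mathcal{M}}$; second, that the associated proto-delays --- the distance functions $z\mapsto d(z,X_i^{(\lambda)})$ recentered by a well-chosen radius --- converge. The overall scheme is the one used in \cite[Section 3]{IPVT23} for ${\rm IPVT}(\HH_d)$; the only genuinely new point is the bookkeeping forced by the $L^1$ product structure.

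I would first pass to adapted coordinates. Writing a point of $\mathcal{M}$ as $(\theta,s;\phi,t)$, with $\theta,\phi\in\partial\HH_2$ the radial endpoints of the two hyperbolic components and $s,t\ge 0$ their hyperbolic radii, one has $d\bigl((\theta,s;\phi,t),\origin\bigr)=s+t$ and, in polar form,
$$
\Vol_{\mathcal{M}}\;=\;4\pi^{2}\,\sinh(s)\sinh(t)\,\mathrm{d}s\,\mathrm{d}t\;\otimes\;\Unif(\mathrm{d}\theta)\otimes\Unif(\mathrm{d}\phi),
$$
with $\Unif$ the uniform probability measure on $\partial\HH_2\cong\mathbb{S}_1$. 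By the marking theorem the direction pairs $(\Theta_i^{(\lambda)},\Phi_i^{(\lambda)})$ of the Poisson points are i.i.d.\ uniform on $\partial\HH_2\times\partial\HH_2$ and independent of the radial data; these furnish $\boldsymbol{\Theta}$ and $\boldsymbol{\Phi}$, independent of each other and of the radii. Changing variables to $(r,u)=(s+t,s)$ (Jacobian $1$, $0\le u\le r$), the $\mathcal{M}$-radii $r_i^{(\lambda)}=d(X_i^{(\lambda)},\origin)$ form a PPP on $\R_{\ge 0}$ whose intensity with respect to $\mathrm{d}r$, after the elementary identity $\sinh(u)\sinh(r-u)=\tfrac12\bigl(\cosh r-\cosh(2u-r)\bigr)$, is
$$
4\pi^{2}\lambda\int_0^r\sinh(u)\sinh(r-u)\,\mathrm{d}u\;=\;2\pi^{2}\lambda\bigl(r\cosh r-\sinh r\bigr)\;\underset{r\to\infty}{\sim}\;\pi^{2}\lambda\,r\,e^{r}.
$$
The factor $r$ here --- absent for $\HH_d$, whose balls grow purely exponentially --- dictates the centering $c(\lambda)$ defined by $\lambda\,c(\lambda)\,e^{c(\lambda)}=1$, equivalently $c(\lambda)=-\log\lambda-\log(-\log\lambda)+o(1)$. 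Setting $D_i^{(\lambda)}:=r_i^{(\lambda)}-c(\lambda)$, the last display shows that $(D_i^{(\lambda)})_{i\ge 1}$ has intensity $\pi^{2}e^{t}(1+o(1))$ on compacts of $\R$, with no mass escaping to $-\infty$; it therefore converges to $\pi^{2}e^{t}\,\mathrm{d}t$, and by the standard vague-convergence criterion for Poisson processes $(D_i^{(\lambda)})_{i\ge1}$ converges in law to $\mathbf{D}$. Since $t\mapsto\pi^{2}e^{t}$ pushes $\pi^{2}e^{t}\,\mathrm{d}t$ forward to Lebesgue measure on $\R_{\ge 0}$, $(\pi^{2}e^{D_i})_{i\ge 1}$ is a rate-$1$ homogeneous PPP on $\R_{\ge 0}$; independence of $\mathbf{D}$ from $(\boldsymbol{\Theta},\boldsymbol{\Phi})$ is inherited from the product form of $\Vol_{\mathcal{M}}$.

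It remains to check that the split $u$ plays no role in the limit and to identify the proto-delays. Conditionally on $r_i^{(\lambda)}=r$ the split $u_i^{(\lambda)}$ has density proportional to $\sinh(u)\sinh(r-u)$ on $[0,r]$, so $u_i^{(\lambda)}/r_i^{(\lambda)}$ converges in law to $\Unif[0,1]$ and, for every fixed index, $\min\bigl(u_i^{(\lambda)},\,r_i^{(\lambda)}-u_i^{(\lambda)}\bigr)\to\infty$ in probability; hence both hyperbolic components of each low-index Poisson point recede to infinity, the first in direction $\Theta_i$ and the second in direction $\Phi_i$, and the nuclei escape to the corona $\widetilde{\partial\mathcal{M}}=\partial\HH_2\times\partial\HH_2\times\R_{\ge 0}$ with limiting configuration $(\Theta_i,\Phi_i,D_i)_{i\ge1}$ (the underlying metric escape, i.e.\ the Gromov--Cauchy property of such sequences, being immediate from the identity $((x,y)\,|\,(x',y'))_\origin=(x\,|\,x')_\origin+(y\,|\,y')_\origin$ for Gromov products). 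Moreover, the Busemann approximation $d_{\HH_2}(z_j,X)-d_{\HH_2}(\origin,X)\to b_\xi(z_j)$, valid as $X\to\xi\in\partial\HH_2$, gives the local-uniform convergence on $\mathcal{M}$
$$
z=(z_1,z_2)\;\longmapsto\;d(z,X_i^{(\lambda)})-c(\lambda)\;\longrightarrow\;b_{\Theta_i}(z_1)+b_{\Phi_i}(z_2)+D_i
$$
of the proto-delays, the additivity over the two factors being precisely the $L^1$ product structure (which in exponential form becomes a product of two hyperbolic Poisson kernels times $e^{D_i}$, as recorded later in \Cref{sep.comp}). These are the two hypotheses of \cite[Theorem 2.3]{IPVT23}, and feeding them in yields $\Vor(\mathbf{X}^{(\lambda)})$ converging in law to $\Vor(\boldsymbol{\Theta},\boldsymbol{\Phi},\mathbf{D})$.

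The main obstacle is the interplay between the proto-delay convergence and the split: one must guarantee that every nucleus capable of affecting the diagram in a given bounded region --- almost surely only finitely many --- has \emph{both} hyperbolic components genuinely escaping to infinity, so that its proto-delay converges to the non-degenerate form $b_\theta(z_1)+b_\phi(z_2)+D$ and not to a limit in which a bounded hyperbolic component survives; this is also why the corona is only $\partial\HH_2\times\partial\HH_2\times\R_{\ge 0}$, the asymptotically uniform split being washed out. Beyond this, what remains is routine: the precise local-uniform proto-delay estimate and the continuity/tightness conditions required by \cite[Theorem 2.3]{IPVT23}, together with carrying along the slightly unusual $\log\log$ correction in the centering $c(\lambda)$.
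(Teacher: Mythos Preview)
Your proposal is correct and follows essentially the same approach as the paper's proof: compute the radial intensity $\sim\pi^{2}\lambda\,r\,e^{r}$, identify the $\log\log$-corrected centering, establish convergence of the delay process to a PPP of intensity $\pi^{2}e^{s}\,\mathrm{d}s$, argue that the effective boundary is $\partial\HH_{2}\times\partial\HH_{2}$ (you via the explicit split distribution $\propto\sinh(u)\sinh(r-u)$, the paper via the observation that nuclei limiting to $(\HH_{2}\times\partial\HH_{2})\cup(\partial\HH_{2}\times\HH_{2})$ would carry infinite delay), and invoke \cite[Theorem~2.3]{IPVT23}. Your treatment is somewhat more detailed---particularly the analysis of the split $u$ and the Busemann-function formulation of the proto-delays---but the structure and ideas coincide with the paper's.
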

\rproof
Consider the ball of radius $r$ centered at $\origin  \in \mathcal{M}$
$$
B_{r}(\origin)=\{x \in \mathcal{M} \ST d(x,\origin) \leq r \} = \{x \in \mathcal{M} \ST \exists \rho \in [0,r]: \mathrm{d}_{\mathbb{H}_2}(x_{1},\origin_{\HH_{2}}) \leq \rho  \; \text{\bf and} \;  \mathrm{d}_{\mathbb{H}_2}(x_{2},\origin_{\HH_{2}}) \leq r-\rho \} \; .
$$ 
Working in the product of Poincaré disk models, the $\mathcal{M}$-volume of $B_{r}(\origin)$ is given by
$$
\phi(r)\overset{\rm def}{=} {\rm Vol}_{\mathcal{M}}(B_{r}(\origin)) =\int_{0}^{r}f_{2}(\rho)f_{2}(r-\rho)\,  \mathrm{d}\rho=2\pi^{2}\left(r \cosh{r}-\sinh{r} \right)  \, ,
$$
where $f_{2}$ is the volume function of $\HH_{2}$ (see~\cite[page 12]{IPVT23}). The quantity $\phi(r)$ is of order $r e^{r}$ for large $r$, therefore the first point $X_1^{(\lambda)}$ of the PPP (i.e.~the point nearest to $\origin$) is roughly at a distance $\log{\frac{1}{\lambda}}-\log{\log{\frac{1}{\lambda}}}+o(1)$ from $\origin$ when the intensity $\lambda$ is small (more precisely $\frac{d(X_1^{(\lambda)},\origin)}{|\log{\lambda}|}\underset{\lambda \downarrow 0}{\overset{(\mathbb{P})}{\Longrightarrow}}1$). We can thus readily identify the following notion of \dfn{proto-delays}
\begin{equation}\label{eq.defdelay}
D^{(\lambda)}_{i} \coloneqq d(X_i^{(\lambda)},\origin)-\log{\frac{1}{\lambda}}+\log{\log{\frac{1}{\lambda}}}, \quad i \geq 1 \;  .
\end{equation}
For $0\leq x \leq y$, apply the mapping theorem for Poisson processes to the points of $\mathbf{X}^{(\lambda)}$ at a distance from $\origin$ within the shifted interval $(x+\log{\frac{1}{\lambda}}-\log{\log{\frac{1}{\lambda}}},y+\log{\frac{1}{\lambda}}-\log{\log{\frac{1}{\lambda}}})$. This gives
\begin{equation}\label{eq.delconv}
\left(D^{(\lambda)}_{i}\right)_{i\geq 1} \overset{\rm law}{\Longrightarrow}\left(D_{i}\right)_{i\geq 1} \; ,
\end{equation}
where $\left(D_{i}\right)_{i\geq 1}$ are the increasing points of a PPP on $\mathbb{R}$ with intensity measure $\pi^{2} e^{s} \mathrm{d}s$. Equivalently, the process of radii $\mathbf{R}=\left(R_{i}=\pi^{2}e^{D_{i}}\right)_{i\geq 1}$ is a rate-1 homogeneous PPP on $\mathbb{R}_{\geq 0}$.

\medskip
\noindent
Let $\partial \mathcal{M}=(\mathbb{H}_2 \times \partial\mathbb{H}_2) \cup (\partial\mathbb{H}_2 \times \mathbb{H}_2) \cup (\partial\mathbb{H}_{2} \times \partial\mathbb{H}_{2})$. From the expression for proto-delays in \Cref{eq.defdelay}, it follows that any nuclei converging in the Gromov sense (see~\cite[Section 2.1]{IPVT23}) towards $(\mathbb{H}_2 \times \partial\mathbb{H}_2)$ or $(\partial\mathbb{H}_2 \times \mathbb{H}_2)$ would have a.s.~infinite delay. This gives the weak convergence towards $\partial\mathbb{H}_2 \times \partial\mathbb{H}_2$, which coincides with the visual boundary of $\mathcal{M}$ in the sense of ${\rm CAT(0)}$ spaces. The statement follows from~\cite[Theorem 2.3]{IPVT23}.
\Qed

\bigskip
Following~\cite[Section 3.3]{IPVT23}, we call $ \widetilde{\partial \mathcal{M}}\coloneqq\partial \HH_{2} \times \partial \HH_{2} \times \mathbb{R}_{\geq 0}$ the \dfn{corona}.  
%\MD{Remark that Eskin--Farb call $\partial \HH_{2} \times \partial \HH_{2}$ the Furstenberg boundary}
Thus $N\coloneqq(\Theta_{i}, \Phi_{i},\pi^{2}e^{D_{i}})_{i\geq 1}$ is a PPP on the corona of intensity measure
\begin{equation}\label{eq.cormes}
\mu= \Unif \otimes \Unif \otimes \Leb_{\mathbb{R}_{\geq 0}}  \; ,
\end{equation}
see \Cref{f.corona} for a portrait. 
\begin{figure}[!hbtp] 
\centering
\includegraphics[width=.5\textwidth]{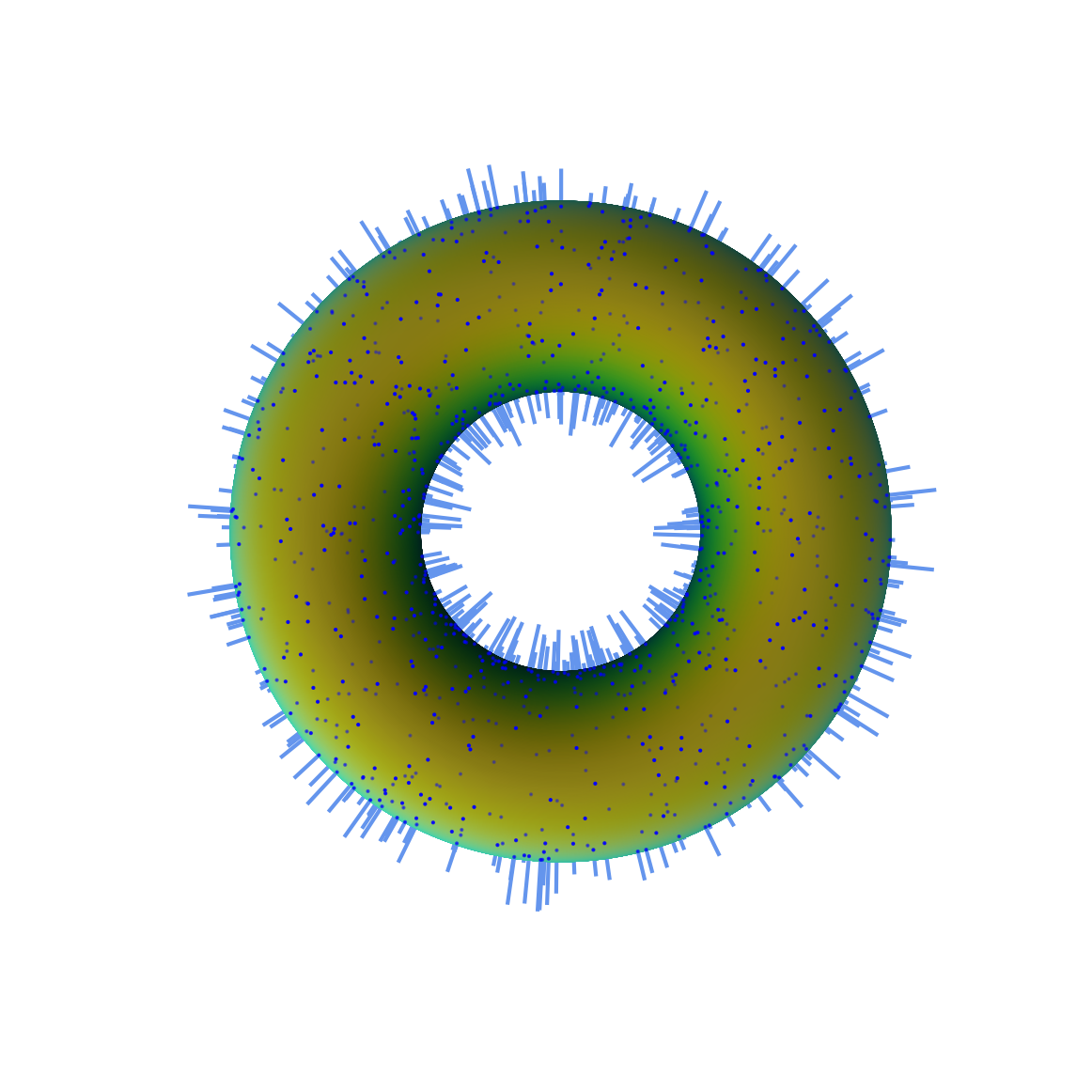}
\caption{Portrait of the corona of $\mathcal{M}$ showing
the first 1000 corona points. The radii of the nuclei are scaled linearly to improve visibility. Each point $(\theta,\phi, r)$ in the corona is joined by a blue segment to its projection $(\theta,\phi)$ onto $\partial \mathbb{H}_{2}\times \partial \mathbb{H}_{2}$, represented here by a yellow/green $2$-torus.
}
		\label{f.corona}
\end{figure}

Working via low--intensity limits, we get the following explicit formulas for the exponential separation:

\begin{prop}[\textsc{Computing the separation}]\label{sep.comp}
The separation from $z=(z_{1},z_{2}) \in \mathcal{M}$ to $(\theta, \phi, r) \in  \widetilde{\partial \mathcal{M}}$ is given by
\[
\bud\bigl(z, (\theta,\phi, r)\bigr)
=
\rcases
{\displaystyle \frac{r}{K(z_{1}, \theta)K(z_{2}, \phi)} &\mbox{in the product of discs model,}\\[10pt]
\displaystyle \frac{r}{\Kh(z_{1}, \theta)\Kh(z_{2}, \phi)} &\mbox{in the product of upper half-planes model,}\\
}
\]
where $K(z, \theta)$ and $\Kh(z, \theta)$ are respectively the hyperbolic Poisson kernel and the modified hyperbolic Poisson kernel defined in~\cite[Section 3.3.1]{IPVT23}.

\end{prop}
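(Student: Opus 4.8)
The plan is to follow, factor by factor, the derivation of the separation formula for $\mathrm{IPVT}(\mathbb{H}_{d})$ carried out in \cite[Section~3.3.1]{IPVT23}, the single new ingredient being the additive splitting of the distance forced by the $L^{1}$ metric~\eqref{def.dist}. Recall that, in the recipe of \cite[Theorem~2.3]{IPVT23} used in the proof of \Cref{prop.convipvt}, the separation governing the IPVT cells is read off from the $\lambda\downarrow 0$ limit of the shifted distances; concretely, using the proto-delays of~\eqref{eq.defdelay},
\begin{equation*}
d\bigl(z,X_{i}^{(\lambda)}\bigr)-\log\tfrac1\lambda+\log\log\tfrac1\lambda \;=\; D_{i}^{(\lambda)}+\Bigl(d\bigl(z,X_{i}^{(\lambda)}\bigr)-d\bigl(\origin,X_{i}^{(\lambda)}\bigr)\Bigr),
\end{equation*}
and exponentiating (after inserting the constant $\pi^{2}$, exactly as for the radii) turns the limiting additive separation into the exponential separation $\bud$. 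So everything reduces to identifying $\lim_{\lambda\downarrow 0}\bigl(d(z,X_{i}^{(\lambda)})-d(\origin,X_{i}^{(\lambda)})\bigr)$ along a nucleus $X_{i}^{(\lambda)}$ Gromov-converging to the corona point $(\theta,\phi,r)$.

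Writing $X_{i}^{(\lambda)}=(X_{i,1}^{(\lambda)},X_{i,2}^{(\lambda)})$, the metric~\eqref{def.dist} splits this quantity as
\begin{equation*}
d\bigl(z,X_{i}^{(\lambda)}\bigr)-d\bigl(\origin,X_{i}^{(\lambda)}\bigr)=\bigl(\mathrm{d}_{\mathbb{H}_2}(z_{1},X_{i,1}^{(\lambda)})-\mathrm{d}_{\mathbb{H}_2}(\origin_{\mathbb{H}_{2}},X_{i,1}^{(\lambda)})\bigr)+\bigl(\mathrm{d}_{\mathbb{H}_2}(z_{2},X_{i,2}^{(\lambda)})-\mathrm{d}_{\mathbb{H}_2}(\origin_{\mathbb{H}_{2}},X_{i,2}^{(\lambda)})\bigr).
\end{equation*}
By \Cref{prop.convipvt} the limiting corona sits over $\partial\mathbb{H}_{2}\times\partial\mathbb{H}_{2}$, so $X_{i,1}^{(\lambda)}\to\theta$ and $X_{i,2}^{(\lambda)}\to\phi$ in $\partial\mathbb{H}_{2}$ in the Gromov sense; in particular $\mathrm{d}_{\mathbb{H}_2}(\origin_{\mathbb{H}_{2}},X_{i,j}^{(\lambda)})\to\infty$ for $j=1,2$ (a nucleus keeping one coordinate bounded would converge towards $\mathbb{H}_{2}\times\partial\mathbb{H}_{2}$ or $\partial\mathbb{H}_{2}\times\mathbb{H}_{2}$ and thus carry infinite delay, as observed in the proof of \Cref{prop.convipvt}). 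Each bracket therefore tends to the Busemann function of $\mathbb{H}_{2}$ at the respective boundary point, which by the $d=2$ instance of \cite[Section~3.3.1]{IPVT23} equals $-\log K(z_{1},\theta)$, resp.\ $-\log K(z_{2},\phi)$, in the product of discs model, and $-\log\Kh(z_{1},\theta)$, resp.\ $-\log\Kh(z_{2},\phi)$, in the product of upper half-planes model, $\Kh$ being the Poisson kernel of the half-plane normalised at the base point as in \cite[Section~3.3.1]{IPVT23}.

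Combining, the shifted distance converges to $D-\log K(z_{1},\theta)-\log K(z_{2},\phi)$, where $r=\pi^{2}\mathrm{e}^{D}$; applying the exponential-and-$\pi^{2}$ renormalisation turns this into $r/\bigl(K(z_{1},\theta)K(z_{2},\phi)\bigr)$, and likewise into $r/\bigl(\Kh(z_{1},\theta)\Kh(z_{2},\phi)\bigr)$ in the other model, the two formulas being transported into one another by the isometry between the two models of $\mathbb{H}_{2}\times\mathbb{H}_{2}$. Equivalently, one may bypass the limiting argument and simply verify that the candidate $\bud$ reproduces, in the $\lambda\downarrow 0$ limit, the sign of $d(z,X_{i}^{(\lambda)})-d(z,X_{j}^{(\lambda)})$ for all pairs of nuclei, which is exactly what \cite[Theorem~2.3]{IPVT23} demands of a separation.

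The computation itself is routine once the $\mathbb{H}_{2}$ Busemann--Poisson-kernel identity is granted; the only delicate point is the interchange of the $\lambda\downarrow 0$ limit with the Gromov limit of the random nuclei, together with the fact that both $\mathbb{H}_{2}$-coordinates genuinely reach $\partial\mathbb{H}_{2}$ — but this is already packaged in \cite[Theorem~2.3]{IPVT23} and used in \Cref{prop.convipvt}, so no new analysis is needed. A last bookkeeping check is that the constant $\pi^{2}$ in $r=\pi^{2}\mathrm{e}^{D}$ is precisely the leading coefficient of $\phi(r)=2\pi^{2}(r\cosh r-\sinh r)\sim\pi^{2}r\,\mathrm{e}^{r}$ produced by the convolution $f_{2}*f_{2}$, so that no spurious constant survives in the final product formula.
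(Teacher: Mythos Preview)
Your argument is correct and is exactly the approach the paper takes: its entire proof reads ``Work in each factor as in \cite[Proof of Lemma 3.3]{IPVT23}.'' You have simply spelled out what that one line means---splitting the shifted distance via the $L^{1}$ metric and applying the Busemann/Poisson-kernel identity from \cite{IPVT23} in each $\mathbb{H}_{2}$ factor---so there is nothing to add.
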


\rproof Work in each factor as in \cite[Proof of Lemma 3.3]{IPVT23}.
\Qed

\iffalse
\tbc{One link phrase here towards distributional properties (say that we could compute much more than hole proba)}
\begin{prop}[\textsc{Hole probability}]\label{prop.holeproba}

\MD{To be done. What is the event that no point of the NML falls at a distance smaller or equal to $\eta$ from $\origin$?}

\end{prop}

\begin{proofof}{\Cref{prop.holeproba}}

\end{proofof}

\matteo{The above to be removed}

\fi

\section{Transitive group action of the isometry group of $\mathcal{M}$ on the corona measure}\label{sec.tga}

%\emph{stricto senso}, only product of hyperbolic isometries are needed, but we can do much more and provide a transitive group action of the full isometry group of $\mathcal{M}$}. 
In this section we provide a transitive action of the isometry group of $\mathcal{M}=(\mathbb{H}_{2}\times\mathbb{H}_{2}, L^{1})$ on the corona $\widetilde{\partial \mathcal{M}}$ which leaves the measure in \Cref{eq.cormes} invariant. Recall that a map $g: \mathcal{M} \to \mathcal{M}$ is an \dfn{isometry} if, $\forall x,y \in \mathcal{M}$,
$$
\mathrm{d}_{\mathbb{H}_2}\bigl((g(x))_{1},(g(y))_{1})+\mathrm{d}_{\mathbb{H}_2}((g(x))_{2},(g(y))_{2}\bigr)=\mathrm{d}_{\mathbb{H}_2}(x_{1},y_{1})+\mathrm{d}_{\mathbb{H}_2}(x_{2},y_{2}) \; .
$$
Let ${\rm Isom}_{1}$ be the group of all such isometries endowed with composition. Then \emph{any} $g \in {\rm Isom}_{1}$ is a direct product of two isometries of $\mathbb{H}_{2}$, whose group we denote by $\mob_2$ (see \cite[page 11]{IPVT23} for a definition) semi-direct with an involution exchanging the factors. Quite surprisingly, we could not find this result in the literature thus we include a proof here:

{
\begin{lem}[\textsc{Isometry group of $\mathcal{M}$}]\label{lem.isomp}
$$
{\rm Isom}_{1}\cong \left(\mob_2 \times \mob_2 \right)\rtimes \mathbb{Z}/ (2 \mathbb{Z})\; ,
$$
where $\cong$ denotes isomorphism, $ \times$ denotes direct product and $\rtimes$ denotes the semi-direct product.
\end{lem}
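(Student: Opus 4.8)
The plan is to show that the isometry group of $\mathcal{M}=(\HH_{2}\times\HH_{2},L^{1})$ contains no "exotic" elements beyond the obvious ones: products of two hyperbolic isometries and the factor-swap. The obvious subgroup $(\mob_{2}\times\mob_{2})\rtimes\mathbb{Z}/(2\mathbb{Z})$ visibly acts by isometries (products preserve each coordinate distance, and the swap preserves the sum since the two summands are identical), so the content is the reverse inclusion: every $g\in{\rm Isom}_{1}$ lies in this subgroup. I would first reduce to the case $g(\origin)=\origin$ by post-composing with a product isometry of $\HH_{2}\times\HH_{2}$ (which is transitive on points), so it suffices to classify isometries fixing the origin.

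The key geometric input is that the flats of $\mathcal{M}$ — totally geodesic isometric copies of $(\mathbb{R}^{2},L^{1})$ of the form $\gamma_{1}\times\gamma_{2}$, where each $\gamma_{i}$ is a bi-infinite geodesic of $\HH_{2}$ — and more importantly the two families of geodesics $\{*\}\times\HH_{2}$ and $\HH_{2}\times\{*\}$, are intrinsically characterized. Concretely, I would argue as follows. In $(\mathbb{R}^{2},L^{1})$ geodesics are not unique; two points $x,y$ with $x_{1}\ne y_{1}$ and $x_{2}\ne y_{2}$ are joined by infinitely many geodesics, whereas if one coordinate agrees the geodesic is unique. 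Pulling this back to $\mathcal{M}$: for $x,y\in\mathcal{M}$ the geodesic between them is unique if and only if $x$ and $y$ agree in one $\HH_{2}$-coordinate (because within a single $\HH_{2}$ factor, CAT$(-1)$ geometry gives uniqueness, while differing in both factors reproduces the $L^{1}$-rectangle ambiguity). Hence an isometry $g$ must preserve the relation "share a coordinate", i.e. it permutes the set $\mathcal{F}$ of maximal subsets of the form $\{p\}\times\HH_{2}$ or $\HH_{2}\times\{q\}$. These split into two families according to whether a fiber meets each member of the other family in exactly one point (it does, across families) — this combinatorial incidence pattern (a "grid") lets one recover which family is which up to the global swap. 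After possibly composing with the swap, $g$ maps each $\{p\}\times\HH_{2}$ to some $\{\sigma(p)\}\times\HH_{2}$ and each $\HH_{2}\times\{q\}$ to $\HH_{2}\times\{\tau(q)\}$, and consistency on intersections forces $g(p,q)=(\tau(q)\text{-component determined}) $ — more precisely $g(p,q)=(g_{1}(p),g_{2}(q))$ for well-defined bijections $g_{1},g_{2}:\HH_{2}\to\HH_{2}$.

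It then remains to check $g_{1},g_{2}\in\mob_{2}$. Fixing $q=\origin_{\HH_{2}}$, the map $p\mapsto g_{1}(p)$ must satisfy $\mathrm{d}_{\HH_{2}}(g_{1}(p),g_{1}(p'))=\mathrm{d}_{\HH_{2}}(p,p')$ for all $p,p'$, because restricting the $L^{1}$ isometry equation to points of the form $(p,\origin_{\HH_{2}})$ and using that $g_{2}(\origin_{\HH_{2}})$ is a fixed point kills the second summand. Thus $g_{1}$ is a bona fide isometry of $\HH_{2}$, hence in $\mob_{2}$, and symmetrically for $g_{2}$. Assembling: every $g$ is, up to the swap, a product $g_{1}\times g_{2}$ with $g_{i}\in\mob_{2}$, which is exactly the claimed semidirect product structure; the semidirect (not direct) form is because conjugating $g_{1}\times g_{2}$ by the swap yields $g_{2}\times g_{1}$.

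I expect the main obstacle to be making the "preservation of the geodesic families" step fully rigorous — in particular showing cleanly that uniqueness of geodesics in $\mathcal{M}$ precisely detects sharing an $\HH_{2}$-coordinate, and that an isometry must therefore send a whole fiber $\{p\}\times\HH_{2}$ onto a single fiber (rather than, say, a geodesic lying diagonally in a flat). One clean way to handle this is via the CAT$(0)$ structure: although $\mathcal{M}$ with the $L^{1}$ metric is CAT$(0)$ (being a product of CAT$(0)$ spaces with an $\ell^{p}$-combination does not stay CAT$(0)$ for $p\ne 2$, so instead I would work with the flat strip / product-decomposition theory of geodesic spaces, or directly with the explicit $L^{1}$ geometry) — on reflection, the most robust route avoids CAT$(0)$ language entirely and argues directly from the $L^{1}$ rectangle phenomenon described above, which is elementary. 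This is presumably the proof attributed to Bram Petri in the acknowledgements, and I would present it in that elementary form.
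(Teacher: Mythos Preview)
Your argument is correct and complete in outline; the characterisation of the fibres $\{p\}\times\HH_{2}$ and $\HH_{2}\times\{q\}$ as precisely the maximal subsets in which every pair of points is joined by a \emph{unique} geodesic goes through as you describe, and the bipartite incidence structure of these fibres then forces the product--times--swap form.

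However, this is not the route the paper takes. The paper's proof also reduces to the stabiliser of the origin via transitivity, but then argues on the tangent space: an isometry fixing $\origin$ acts linearly on $T_{\origin}\mathcal{M}\cong\mathbb{R}^{4}$ preserving the Finsler norm, which is the $\ell^{1}$-sum of two Euclidean $\ell^{2}$-norms on $\mathbb{R}^{2}\oplus\mathbb{R}^{2}$; by Mazur--Ulam the linear isometry group of this norm is $(O(2)\times O(2))\rtimes\mathbb{Z}/2\mathbb{Z}$, and one then pushes back to $\mathcal{M}$ via the exponential map. Your approach is more elementary in that it stays entirely in the metric category and never invokes differentiability of isometries, a Finsler structure, or the exponential map---all of which the paper uses implicitly and which require some care for a non-Riemannian metric. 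Conversely, the paper's tangent-space argument is shorter once one accepts that machinery, and it makes transparent the analogy with the $L^{2}$ case (where the same linearisation immediately gives $O(4)$ at the origin, then cut down by the product structure). Your worry that the $L^{1}$ product is not ${\rm CAT}(0)$ is well placed, and your decision to bypass it via the direct geodesic-uniqueness argument is exactly right.
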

\begin{proof}
One can prove this result by showing that ${\rm Isom}_{1}\cong {\rm Isom}_{2}$, where ${\rm Isom}_{2}$ is the group of isometries of $(\mathbb{H}_{2}\times \mathbb{H}_{2},L^{2})$, and the latter is known to be $\left(\mob_2 \times \mob_2 \right)\rtimes \mathbb{Z}/ (2 \mathbb{Z})$ (see~\cite[Proposition 2.2]{charette2017}). However, since \Cref{lem.isomp} is possibly surprising when compared to its Euclidean counterpart, we include here a constructive proof.

\medskip
One direction of the proof is obvious, we only need to prove that any $g\in {\rm Isom}_{1}$ lies in $(\mob_2\times \mob_2) \rtimes\mathbb{Z}/ (2 \mathbb{Z})$. Let us work in the model of $\mathcal{M}$ given by the product of upper-half planes $\mathbb{U}_{2} \times \mathbb{U}_{2}$, in which $\origin=(i,i)$. Here, the group action of $\mob_2 \times \mob_2$ is given by $SL_{2}(\mathbb{R})\times SL_{2}(\mathbb{R})$ (we include orientation-reversing isometries). Since $SL_{2}(\mathbb{R})\times SL_{2}(\mathbb{R})$ acts transitively on $\mathcal{M}$, there exists $h \in {\rm Isom}_{1}$ s.t.~$h\left(g (i,i)\right)=(i,i)$. In particular, $g \in (SL_{2}(\mathbb{R})\times SL_{2}(\mathbb{R}))({\rm Stab}((i,i)))$, where ${\rm Stab}((i,i))$ is the stabilizer of the point $(i,i)$ in the isometry group of $\mathcal{M}$. In order to conclude, we need to show that ${\rm Stab}((i,i)) \cong (O(2) \times O(2)) \rtimes \mathbb{Z}/ (2 \mathbb{Z})$, where $O(2)$ is the orthogonal group of dimension $2$.

\medskip
First, any element $t \in {\rm Stab}((i,i))$ acts on the tangent space at $(i,i)$, which we denote by $T_{(i,i)}$. $T_{(i,i)}$ comes with a norm which induces the distance on $\mathcal{M}$ (in the sense of Finsler manifolds). The latter norm is an $L^1$ combination of the two $L^{2}$ norms of the two factors, and it is preserved by any such element $t$. Therefore, the derivative of any element in ${\rm Stab}((i,i))$ lies in $(O(2) \times O(2)) \rtimes \mathbb{Z}/ (2 \mathbb{Z})$, and the latter is the linear isometry group of this norm on the tangent space due to the Mazur--Ulam theorem. In order to conclude, use the exponential map to get back to $\mathcal{M}$ to show that there exists a unique such isometry $g$.
\end{proof}
}

\medskip
\noindent
Combining \Cref{eq.cormes}, \Cref{sep.comp} and \Cref{lem.isomp} gives:
\begin{cor}[\textsc{Transitive group action of ${\rm Isom}_{1}$}]\label{prop.tga}

For any isometry $g=(g_{1},g_{2})\in {\rm Isom}_{1}$, the action on a corona point $(\theta,\phi,r)\in \widetilde{\partial \mathcal{M}}$ defined in the product of disks model
$$
g(\theta,\phi,r) \coloneqq \left(g_{1}(\theta), g_{2}(\phi),\frac{r}{K(g_{1}^{-1}\left(\origin),\theta\right) \cdot K(g_{2}^{-1}\left(\origin),\phi\right)}\right)
$$
is a transitive group action that leaves the corona measure $\mu$ in \eqref{eq.cormes} invariant. Consequently, the law of ${\rm IPVT}(\mathcal{M})$ is invariant under ${\rm Isom}_{1}$.
\end{cor}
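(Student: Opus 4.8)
The plan is to follow the strategy of \cite[Section~5]{IPVT23} for $\HH_{d}$, exploiting the product structure of $\mathcal{M}$ to reduce every assertion to a single hyperbolic factor together with the factor-swap coming from the semidirect factor in \Cref{lem.isomp}. Write $g\in{\rm Isom}_{1}$ either as a direct product $g=(g_{1},g_{2})$ with $g_{i}\in\mob_{2}$, or as such a product composed with the involution $\sigma$, which we let act on the corona by $\sigma(\theta,\phi,r)=(\phi,\theta,r)$. Since the modified claims for $\sigma$ are trivial (one has $K(\origin,\cdot)\equiv 1$, so $\sigma$ needs no rescaling, and $\sigma_{*}\mu=\mu$ because the first two coordinates of $\mu$ carry the same measure) and since $\sigma(g_{1},g_{2})\sigma^{-1}=(g_{2},g_{1})$, it suffices to establish the three assertions for direct-product isometries and to check that the displayed formula is compatible with composition. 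This last point reduces, factor by factor, to the cocycle identity for the hyperbolic Poisson kernel, $K(h z,\theta)=K(z,h^{-1}\theta)\,K(h\origin,\theta)$ for $h\in\mob_{2}$ (recorded in \cite[Section~3.3.1]{IPVT23}; equivalently, the conformal-cocycle property of the boundary action of $\mob_{2}$ on $\partial\HH_{2}$).

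The second step is equivariance of the separation: for all $z=(z_{1},z_{2})\in\mathcal{M}$, all $(\theta,\phi,r)\in\widetilde{\partial\mathcal{M}}$ and all $g\in{\rm Isom}_{1}$,
$$
\bud\bigl(g z, g(\theta,\phi,r)\bigr)=\bud\bigl(z,(\theta,\phi,r)\bigr).
$$
By \Cref{sep.comp} the left-hand side equals $\dfrac{r}{K(g_{1}^{-1}\origin,\theta)K(g_{2}^{-1}\origin,\phi)}\Big/\bigl(K(g_{1}z_{1},g_{1}\theta)K(g_{2}z_{2},g_{2}\phi)\bigr)$ when $g=(g_{1},g_{2})$, and the cocycle identity applied in each factor shows $K(g_{i}z_{i},g_{i}\theta)\,K(g_{i}^{-1}\origin,\theta)=K(z_{i},\theta)$, so the ratio is $r/\bigl(K(z_{1},\theta)K(z_{2},\phi)\bigr)=\bud(z,(\theta,\phi,r))$; for $\sigma$ the identity is immediate from the symmetry of $r/(K(z_{1},\theta)K(z_{2},\phi))$ in the two factors. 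In particular the corona map attached to $g$ is a bijection of $\widetilde{\partial\mathcal{M}}$, and, combined with Step~1, the assignment $g\mapsto(\text{action of }g)$ is a genuine left action of ${\rm Isom}_{1}$.

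For invariance of $\mu$ and transitivity, first treat $g=(g_{1},g_{2})$. Disintegrating $\mu=(\Unif\otimes\Unif)(\ud\theta\,\ud\phi)\otimes\Leb(\ud r)$, the pair $(\theta,\phi)$ is pushed forward by $(g_{1},g_{2})$ with Radon--Nikodym density $K(g_{1}\origin,\cdot)\,K(g_{2}\origin,\cdot)$ with respect to $\Unif\otimes\Unif$ (Möbius invariance of harmonic measure, $(g_{i})_{*}\nu_{\origin}=\nu_{g_{i}\origin}$), while conditionally on $(\theta,\phi)$ the radius is multiplied by $1/\bigl(K(g_{1}^{-1}\origin,\theta)K(g_{2}^{-1}\origin,\phi)\bigr)$, which sends $\Leb$ to $K(g_{1}^{-1}\origin,\theta)K(g_{2}^{-1}\origin,\phi)\cdot\Leb$. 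The total density of the pushforward with respect to $\mu$ is thus $K(g_{1}\origin,\theta')K(g_{1}^{-1}\origin,g_{1}^{-1}\theta')\cdot K(g_{2}\origin,\phi')K(g_{2}^{-1}\origin,g_{2}^{-1}\phi')$, and each of the two products equals $1$ since the cocycle identity (with $h=g_{i}$, $z=g_{i}^{-1}\origin$) gives $K(g_{i}\origin,\cdot)\,K(g_{i}^{-1}\origin,g_{i}^{-1}(\cdot))=K(\origin,\cdot)=1$. Hence $g_{*}\mu=\mu$; $\sigma_{*}\mu=\mu$ is clear. Transitivity: $\mob_{2}$ acts transitively on $\partial\HH_{2}$, so $\mob_{2}\times\mob_{2}$ moves $(\theta,\phi)$ to any prescribed $(\theta',\phi')$, and composing with a one-parameter group of hyperbolic elements with attracting fixed points $\theta',\phi'$ then rescales the radius by $K(h_{1}^{-t}\origin,\theta')K(h_{2}^{-t}\origin,\phi')$, a continuous function of $t$ ranging over all of $(0,\infty)$, so any radius can be reached.

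Finally, to deduce invariance of the law of ${\rm IPVT}(\mathcal{M})$: by \Cref{prop.convipvt} the tessellation is obtained from the PPP $N$ on $\widetilde{\partial\mathcal{M}}$ of intensity $\mu$ by declaring the cell of a corona point $p$ to be $C(p)=\{z\in\mathcal{M}:\bud(z,p)\le\bud(z,p')\ \text{for all}\ p'\in N\}$, extended to $\partial\mathcal{M}$ by continuity of $\bud(z,\cdot)$. Measure invariance gives $g_{*}N\overset{\rm law}{=}N$, and equivariance of $\bud$ gives $g\bigl(C(\theta,\phi,r)\bigr)=$ the cell of $g(\theta,\phi,r)$ computed with respect to $g_{*}N$; combining, the image of ${\rm IPVT}(\mathcal{M})$ under any $g\in{\rm Isom}_{1}$ has the same law as ${\rm IPVT}(\mathcal{M})$. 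The step I expect to require the most care is the bookkeeping in Steps 1--3: checking that the \emph{single} cocycle identity for $K$ simultaneously yields the homomorphism property, the equivariance of $\bud$, and the exact cancellation of the angular and radial Jacobians, while correctly tracking the interaction of the direct-product isometries with the swap $\sigma$ and making sure the verifications are model-independent (so that the two lines of \Cref{sep.comp} give the same action).
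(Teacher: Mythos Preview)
Your proposal is correct and follows essentially the same route the paper intends: the paper does not give a written proof at all but simply declares the result a corollary of \eqref{eq.cormes}, \Cref{sep.comp} and \Cref{lem.isomp}, implicitly pointing to the argument of \cite[Section~5]{IPVT23}, which is exactly what you carry out factor by factor (Poisson-kernel cocycle for equivariance of $\bud$, cancellation of angular and radial Jacobians for $g_{*}\mu=\mu$, and transitivity via hyperbolic one-parameter subgroups). Your explicit treatment of the swap $\sigma$ is a useful addition, since the paper's displayed formula only names direct-product isometries $g=(g_{1},g_{2})$ even though the conclusion is stated for all of ${\rm Isom}_{1}$.
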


\noindent

\section{Ends of cells via hyperbolic crosses}\label{sec.depmod}

\medskip
In this section we work in the model of $\mathcal{M}$ given by the product of upper-half planes $\mathbb{U}_{2} \times \mathbb{U}_{2}$ and focus on the cell of the corona point $(\theta_{1},\phi_{1},r_{1})$ having the smallest radius. We call the cell $C(\theta_{1},\phi_{1},r_{1})$ the \dfn{zero cell} and denote it by $\mathcal{C}_{2,2}$. Contrary to the zero cell of $\text{IPVT}(\mathbb{H}_{d})$\footnote{See~\cite[Fig.~1.2]{IPVT23} for $d=2,3$ and \href{https://skfb.ly/oDVU8}{https://skfb.ly/oDVU8} for an interactive model in $d=3$.}, the description of $\mathcal{C}_{2,2}$ is less straightforward. However by \Cref{prop.tga} we can study the distributional properties of $\mathcal{C}_{2,2}$ to get those of the cell of ${\rm IPVT}(\mathcal{M})$ containing an arbitrary point. Given two corona points $(\theta_{1},\phi_{1},r_{1})$ and $(\theta_{2},\phi_{2},r_{2})$, introduce their \dfn{no man's land}, denoted by $\text{NML}((\theta_{1},\phi_{1}, r_{1}),(\theta_{2},\phi_{2}, r_{2}))$, as the locus of points $z\in \mathcal{M}$ s.t.~$\bud\bigl(z, (\theta_{1},\phi_{1}, r_{1})\bigr)=\bud\bigl(z, (\theta_{2},\phi_{2}, r_{2})\bigr)$ with the obvious extension at $\partial \mathcal{M}$. Recall the modified kernel $\hat{K}$ in \cite[Page 15]{IPVT23} and \Cref{sep.comp}. Then:

\begin{cor}[\textsc{Geometry of the no man's land in the product of upper-half planes}]\label{cor.nmlinfinf}
Let $(\theta_{1},\phi_{1},r_{1})$ and $(\theta,\phi,r)$ be two corona points.
In the model of $\mathcal{M}$ given by the product of upper-half planes $\mathbb{U}_{2} \times \mathbb{U}_{2}$:
\begin{itemize}
\item[(i)] If $(\theta_{1},\phi_{1})$ is sent at $(\infty,\infty)$,

$$
\emph{\text{NML}}((\infty,\infty, r_{1}),(\theta,\phi, r)) = \left \lbrace (z_{1},z_{2})\in \mathcal{M}\ST  |z_{1}-\theta|^{2} \cdot |z_{2}-\phi|^{2}=\frac{r_{1}}{r}\left(1+|\theta|^{2} \right)\left(1+|\phi|^{2} \right)\right \rbrace \; ;
$$
\item[(ii)] If $(\theta_{1},\phi_{1})$ is sent at $(\infty,y)$, for $y\in \mathbb{R}$,
{
$$
\emph{\text{NML}}((\infty,y, r_{1}),(\theta,\phi, r)) = \left \lbrace (z_{1},z_{2})\in \mathcal{M} \ST  \frac{|z_{1}-\theta|^{2} \cdot |z_{2}-\phi|^{2}}{|z_{2}-y|^{2}}=\frac{r_{1}}{r}\frac{\left(1+|\theta|^{2} \right)\left(1+|\phi|^{2} \right)}{1+|y|^{2}}\right \rbrace \;.
$$
}

\end{itemize}
\end{cor}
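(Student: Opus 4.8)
The plan is to read the defining equation of the no man's land, $\bud(z,(\theta_{1},\phi_{1},r_{1}))=\bud(z,(\theta,\phi,r))$, through the explicit product formula of \Cref{sep.comp} and then simply solve for the set of $z=(z_{1},z_{2})$. Concretely, I will use that in the product of upper half-planes model the modified hyperbolic Poisson kernel of~\cite[Section 3.3.1]{IPVT23} has the familiar shape
\[
\Kh(z,w)=\frac{\mathrm{Im}(z)\,(1+|w|^{2})}{|z-w|^{2}}\quad (w\in\partial\HH_{2}\setminus\{\infty\}),\qquad \Kh(z,\infty)=\mathrm{Im}(z),
\]
the normalisation being the one for which $\Kh(i,w)=1$ for every $w$, since $\origin$ has coordinates $(i,i)$ in this model. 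Before computing I will reduce to the two stated configurations: because $\mob_{2}$ acts transitively on $\partial\HH_{2}$ (\Cref{lem.isomp}) and the separation is ${\rm Isom}_{1}$-equivariant, i.e.\ $\bud(gz,g\cdot(\theta,\phi,r))=\bud(z,(\theta,\phi,r))$ — the Poisson-kernel cocycle identity that already underlies \Cref{prop.tga} — the no man's land transforms equivariantly. Hence, applying the isometry that carries $(\theta_{1},\phi_{1})$ to $(\infty,\infty)$, resp.\ to $(\infty,y)$, and relabelling the image radii and the image of the second corona point, it suffices to treat the cases $(\theta_{1},\phi_{1})=(\infty,\infty)$, resp.\ $(\theta_{1},\phi_{1})=(\infty,y)$.

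For (i), \Cref{sep.comp} together with $\Kh(z_{j},\infty)=\mathrm{Im}(z_{j})$ turns the equation $\bud(z,(\infty,\infty,r_{1}))=\bud(z,(\theta,\phi,r))$ into
\[
\frac{r_{1}}{\mathrm{Im}(z_{1})\,\mathrm{Im}(z_{2})}
=\frac{r\,|z_{1}-\theta|^{2}\,|z_{2}-\phi|^{2}}{\mathrm{Im}(z_{1})\,\mathrm{Im}(z_{2})\,(1+|\theta|^{2})\,(1+|\phi|^{2})},
\]
and cancelling the positive factor $\mathrm{Im}(z_{1})\mathrm{Im}(z_{2})$ and rearranging gives exactly $|z_{1}-\theta|^{2}|z_{2}-\phi|^{2}=\tfrac{r_{1}}{r}(1+|\theta|^{2})(1+|\phi|^{2})$. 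For (ii), the only change is that $\Kh(z_{2},\phi_{1})=\Kh(z_{2},y)=\mathrm{Im}(z_{2})(1+|y|^{2})/|z_{2}-y|^{2}$, so the left-hand side above is replaced by $r_{1}|z_{2}-y|^{2}/\bigl(\mathrm{Im}(z_{1})\mathrm{Im}(z_{2})(1+|y|^{2})\bigr)$; after the same cancellation and rearrangement one obtains $|z_{1}-\theta|^{2}|z_{2}-\phi|^{2}/|z_{2}-y|^{2}=\tfrac{r_{1}}{r}(1+|\theta|^{2})(1+|\phi|^{2})/(1+|y|^{2})$, as claimed.

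There is no genuine obstacle here; the content is bookkeeping, and the only two places that want a little care are (a) correctly using the degenerate form $\Kh(\cdot,\infty)=\mathrm{Im}(\cdot)$ of the kernel at the boundary point $\infty$, where the $(1+|w|^{2})$ factor disappears, and (b) the reduction step, where one must keep in mind that the isometry rescales $r_{1}$ and $r$ and moves the second corona point — which is harmless since the statement is phrased in terms of the post-isometry data. Accordingly, most of the actual writing will consist in recalling from~\cite[Section 3.3.1]{IPVT23} the normalisation of $\Kh$ and the ${\rm Isom}_{1}$-invariance of $\bud$, after which (i) and (ii) are one line each.
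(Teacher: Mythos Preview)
Your proposal is correct and follows exactly the approach the paper intends: the corollary is stated immediately after recalling $\Kh$ and \Cref{sep.comp}, with no further proof given, so the implied argument is precisely your direct substitution of the upper-half-plane kernel into the product separation formula and cancellation of $\mathrm{Im}(z_{1})\mathrm{Im}(z_{2})$. Your discussion of the reduction step via ${\rm Isom}_{1}$ is slightly more than is needed here, since the statement is already formulated in the post-isometry coordinates, but it does no harm.
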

%Hence ${\text{NML}}((\infty,\infty, r_{1}),(\theta,\phi, r))$ is a quadric hypersurface of dimension 3 (more precisely, an elliptic hyperboloid of one sheet).Think at $(x^{2}+y^{2})(u^{2}+w^{2})=1$ for $y\geq 0$ and $w\geq 0$: it has both elliptic (bounded) sections (say along the plane $x=y=0$ or $u=w=0$) and hyperbolic (unbounded) sections (say along the plane $x=w=0$ or $y=u=0$).

\medskip
We will now prove that, almost surely, all points of $\mathcal{C}_{2,2}$ at $\partial \mathbb{H}_{2} \times \partial \mathbb{H}_{2}$, are included in the set $\mathcal{E}(\Theta_{1},\Phi_{1})=\{(\Theta_{1},\tau_{2}) \, \ST \, \tau_{2} \in \partial\mathbb{H}_{2} \} \cup \{ (\tau_{1},\Phi_{1})\, \ST \,\tau_{1} \in \mathbb{H}_{2} \}$, which is the end of $\mathcal{C}_{2,2}$. This proves \Cref{thm.1} by \Cref{prop.tga}. An important ingredient of the proof is the following elementary result from Euclidean geometry, whose proof is left as an exercise:

\begin{lem}[\textsc{Largest disk inscribed in a hyperbolic cross}]\label{lem.kl}
For $(a,b)\in \mathbb{R}_{2}$ and $c\in \mathbb{R}_{\geq 0}$, define the \dfn{hyperbolic cross} ${\rm HC}(a,b,c)\coloneqq\{ (x,y) \in \mathbb{R}_{2} \sut (x-a)^{2}(y-b)^{2}\leq c (1+a^{2})(1+b^{2})\}$.
\medskip
Then the largest disk contained in ${\rm HC}(a,b,c)$ has center $C=(a,b)$ and radius $\rho=\sqrt{2}\left[c \,  (1+a^{2})(1+b^{2})) \right]^{1/4}$.

\end{lem}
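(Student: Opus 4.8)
The plan is to analyze the region $\mathrm{HC}(a,b,c)=\{(x,y): (x-a)^2(y-b)^2 \le c(1+a^2)(1+b^2)\}$ directly. First I would normalize: set $u=x-a$, $v=y-b$, and $R^2 = c(1+a^2)(1+b^2)$, so the cross becomes $\{(u,v): u^2 v^2 \le R^2\}$, i.e. $|uv|\le R$. This is the classic four-pronged region bounded by the two hyperbolas $uv = \pm R$, symmetric under $(u,v)\mapsto(-u,v)$, $(u,v)\mapsto(u,-v)$ and $(u,v)\mapsto(v,u)$. The claim is that the largest inscribed disk is centered at the origin (i.e.\ at $(a,b)$ in the original coordinates), with radius $\rho$ satisfying $\rho^2 = 2\sqrt{R} \cdot \sqrt{?}$ — more precisely the claimed radius is $\rho=\sqrt{2}\,R^{1/2}$ when $R^2=c(1+a^2)(1+b^2)$, i.e.\ $\rho = \sqrt 2\,[c(1+a^2)(1+b^2)]^{1/4}$, consistent with the statement.

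Next I would prove the two halves. For the lower bound (the disk of radius $\rho=\sqrt{2}R^{1/2}$ centered at $(a,b)$ is contained in the cross), it suffices to check that the disk $u^2+v^2 \le 2R$ avoids the forbidden region $|uv|>R$. On that disk, by AM–GM, $|uv|\le \tfrac12(u^2+v^2)\le \tfrac12(2R)=R$, so indeed $u^2v^2\le R^2$ everywhere on the closed disk; hence it lies in $\mathrm{HC}(a,b,c)$. For the upper bound (no larger disk fits, and in fact $(a,b)$ is the only possible center for a disk of this radius), I would argue as follows. The point on the boundary hyperbola $uv=R$ nearest to the origin is, by symmetry and a one-line Lagrange/AM–GM computation, the point $u=v=\sqrt R$, at Euclidean distance $\sqrt{2R}=\rho$ from the origin; likewise $u=-v=\sqrt R$ on $uv=-R$. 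So the origin is at distance exactly $\rho$ from $\partial\,\mathrm{HC}$, giving that the inscribed disk at $(a,b)$ is maximal among disks centered there. To see no other center does better: any inscribed disk of radius $\rho'$ centered at $(u_0,v_0)$ must keep $(u_0,v_0)$ at distance $\ge \rho'$ from all four hyperbola branches; a short convexity/symmetry argument — e.g.\ averaging the four reflected copies of the center, which stays inside since each prong-complement is convex in the relevant half-plane, or simply noting the region's closure is "thin" along the axes — forces $\rho'\le\rho$ with equality only at the origin.

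The main obstacle is the uniqueness/maximality part: showing that no off-center disk can be strictly larger. The cleanest route I can see is to observe that $\mathrm{HC}(a,b,c)$ is a union of four closed convex sets is false (the prongs meet only at the center region), so instead I would use that the complement $\{|uv|>R\}$ has four connected components, each of which is convex (each is $\{uv>R\}$ intersected with a half-plane, and $\{uv>R,\,u>0\}$ is convex since it is the epigraph-type region $v>R/u$ with $R/u$ convex on $u>0$). An inscribed disk must be disjoint from all four convex components; the distance from a point to a convex set is a convex function, so the radius function $(u_0,v_0)\mapsto \min_{k}\mathrm{dist}((u_0,v_0),\text{component}_k)$ is a min of convex functions — not automatically concave. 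To get around this I would instead exploit the dihedral symmetry group of order $8$: given any inscribed disk $D$ of radius $\rho'$, its orbit average (Steiner-type symmetrization, or just: the disk centered at the centroid of the $8$ images with radius $\rho'$) is still inscribed because each of the four convex forbidden components is mapped to another such component and convexity of distance gives $\mathrm{dist}(\bar c,\,\text{comp}_k)\ge \tfrac18\sum_j \mathrm{dist}(c_j,\sigma_j^{-1}\text{comp}_k)\ge\rho'$; by symmetry the centroid of the orbit is the origin, so $\rho'\le\rho$. I expect the details of making this averaging argument rigorous to be the one genuinely fiddly point; everything else is the two AM–GM lines above. Since the lemma is explicitly flagged as "left as an exercise," I would in the paper simply record the AM–GM containment argument and the nearest-point computation, and state the maximality via the symmetrization remark without belaboring it.
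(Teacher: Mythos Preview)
The paper explicitly leaves this lemma as an exercise, so there is no proof there to compare against. Your reduction to $|uv|\le R$ with $R=[c(1+a^{2})(1+b^{2})]^{1/2}$ and the AM--GM containment argument are clean and correct, as is the nearest-point computation showing that the centred disk of radius $\sqrt{2R}$ is tangent to $\partial\,\mathrm{HC}$.

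The genuine gap is in your maximality step. You claim that convexity of the distance to each convex forbidden component $K_{k}$ yields
\[
\mathrm{dist}\bigl(\bar c,\,K_{k}\bigr)\;\ge\;\tfrac18\sum_{j}\mathrm{dist}\bigl(c_{j},\,\sigma_{j}^{-1}K_{k}\bigr),
\]
but for convex $K$ the function $x\mapsto\mathrm{dist}(x,K)$ is convex, so Jensen gives the \emph{opposite} inequality $\mathrm{dist}(\bar c,K_{k})\le\tfrac18\sum_{j}\mathrm{dist}(c_{j},K_{k})$. Hence the orbit-averaging argument, as written, does not show that the centred disk is at least as large; you correctly flagged this as the fiddly point, but the proposed fix fails. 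A direct replacement that stays at the same level of difficulty as your AM--GM line: for any inscribed disk of radius $\rho'$ centred at $(u_{0},v_{0})$, the four boundary points $(u_{0}\pm t,\,v_{0}\pm t)$ with $t=\rho'/\sqrt{2}$ all lie in the cross. Averaging the $uv$-products of the pair $(u_{0}+t,v_{0}+t)$, $(u_{0}-t,v_{0}-t)$ gives $u_{0}v_{0}+t^{2}\in[-R,R]$; averaging the other pair gives $u_{0}v_{0}-t^{2}\in[-R,R]$; subtracting yields $2t^{2}\le 2R$, i.e.\ $\rho'\le\sqrt{2R}$.
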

\medskip
\begin{proofof}{\Cref{thm.1}}
The proof is inspired by~\cite[Proof of Theorem 1.4]{IPVT23}. Consider the corona process whose intensity measure is given in \Cref{eq.cormes}, and let $(\Theta_{1},\Phi_{1},R_{1})$ be the corona point with the smallest radius. For $i\geq 2$, let $(\Theta_{i},\Phi_{i},R_{i})$ be any other corona point. In the model of $\mathcal{M}$ given by the product of upper-half planes $\mathbb{U}_{2} \times \mathbb{U}_{2}$, in which $(\Theta_{1},\Phi_{1})$ is sent at $(\infty,\infty)$, define the following plane
\begin{equation}\label{eq.btpl}
 \mathcal{P}\coloneqq\{ (\rm{Re}(z_{1}),\rm{Re}(z_{2})) \sut \rm{Im}(z_{1})=\rm{Im}(z_{2})=0, (z_{1},z_{2}) \in \mathcal{M} \} \; .
 \end{equation}
Then the set of points in $\mathcal{P}$ at smaller separation to $(\Theta_{i},\Phi_{i},R_{i})$ than to $(\Theta_{1},\Phi_{1},R_{1})$ is the hyperbolic cross ${\rm HC}\left(\mathrm{Ste}({\Theta}_{i}),\mathrm{Ste}({\Phi}_{i}),\frac{R_{1}}{R_{i}}\right)$, where $\mathrm{Ste}({\Phi}_{i}) \in \mathbb{R}$ denotes the stereographic projection of $\Phi_{i}$ (analogously for $\Theta_{i}$). Equivalently,
$$
\partial \left({\rm HC}(\mathrm{Ste}({\Theta}_{i}),\mathrm{Ste}({\Phi}_{i}),\frac{R_{1}}{R_{i}})\right) = \mathcal{P} \cap{\rm NML}((\infty,\infty,R_{1}),({\Theta}_{i},{\Phi}_{i},R_i)),
$$ 
where $\partial\left(A\right)$ denotes the boundary of the measurable set $A$ in the topology of $\mathbb{R}_{2}$.
By~\cite[Lemma 3.1]{IPVT23}, the PPP 
$\left(\mathrm{Ste}({\Theta}_{i}),\mathrm{Ste}({\Phi}_{i}),R_i-R_1\, \ST \, i\geq 2\right)$ has the following intensity measure 
\begin{equation}\label{eq.depmod}
\frac{1}{\pi}\frac{1}{\bigl(1+x^{2}\bigr)} \,\mathrm{d}x \otimes \frac{1}{\pi}\frac{1}{\bigl(1+y^{2}\bigr)} \,\mathrm{d}y \otimes   \mathrm{d}t \,\mathbf{1}_{t >0} 
\end{equation}

\noindent
in $\mathbb{R}_{2}\times \mathbb{R}_{+}$. This provides a deposition model of hyperbolic crosses (the unbounded blue regions in \Cref{fig.rain}, left). By \Cref{lem.kl}, ${\rm HC}\left(\mathrm{Ste}({\Theta}_{i}),\mathrm{Ste}({\Phi}_{i}),\frac{R_{1}}{R_{i}}\right)$ contains the disk of center $\left(\mathrm{Ste}({\Theta}_{i}),\mathrm{Ste}({\Phi}_{i})\right)$ and radius $\sqrt{2}\left[\frac{R_{1}}{R_{i}} \,  (1+|\mathrm{Ste}({\Theta}_{i})|^{2})(1+|\mathrm{Ste}({\Phi}_{i})|^{2})) \right]^{1/4}$. For $i \geq 2$, these disks provide a Poisson random ball model (yellow disks in~\Cref{fig.rain}, left). We now prove that the latter covers $\mathcal{P}$ a.s.
Conditionally on $R_{1}=s$, apply the Poisson mapping theorem wrt the change of variables $\rho = \sqrt{2}\bigl(\frac{s}{s+t}(1+|x|^{2})(1+|y|^{2}) \bigr)^{\frac{1}{4}}$ and get, for any test function $f$,
\begin{equation}\label{condintensm}
\begin{split}
   \frac{1}{\pi^{2}} \int_{\mathcal{P}\times \mathbb{R}_{+}} f\Bigl(x, &\sqrt{2}\Bigl((1+|x|^{2})(1+|y|^{2})\left(\frac{s}{s+t}\right)\Bigr)^{\frac{1}{4}}\Bigr) \frac{1}{\left(1+|x|^{2} \right)}\, \frac{1}{\left(1+|y|^{2} \right)}\,  \mathrm{d}x  \mathrm{d}y\, \mathrm{d}t\\
& =  \frac{16 s}{\pi^{2}} \int_{\mathcal{P}\times \mathbb{R}_{+}} f\left(x,y,\rho \right) \frac{1}{\rho^{5}} \mathbf{1}_{\rho^{4}\leq (1+|x|^{2})(1+|y|^{2})} \,\mathrm{d}x \mathrm{d}y \,  \mathrm{d}\rho \; .
\end{split}
\end{equation}
The above (conditional) intensity measure coincides with a Poisson random ball model on $\mathcal{P}\times \mathbb{R}_{+}$ considered by Biermé--Estrade~\cite{BEcovering}\footnote{In the notation of~\cite[Section~4.1]{BEcovering}, $\beta =4 > 2= d={\rm dim}(\mathcal{P})$, where our $\mathcal{P}$ is defined in \eqref{eq.btpl}.} in which ``large'' disks s.t.~$\rho^{4}\geq (1+|x|^{2})(1+|y|^{2})$ are excluded. However, the Biermé--Estrade process covers a.s.~$\mathcal{P}$ due to ``many small balls'' s.t.~$\rho^{2}\leq 1$ (high frequency covering), and so does our model, and hence the hyperbolic crosses by comparison.
\clearpage

Now, for $y \in \mathbb{R}$, perform a Cayley transform (see~\cite[Page 11]{IPVT23}) which sends $(\Theta_{1},\Phi_{1})$ to $(\infty,y)$. Then the region of $\mathcal{P}$ at smaller separation to $(\Theta_{i},\Phi_{i},R_{i})$ than to $(\Theta_{1},\Phi_{1},R_{1})$ is, \underline{almost surely}, an unbounded, ``mushroom-like'' region meeting the axis ${\rm Re}(z_{2})=y$ at the single point $\mathrm{Ste}({\Theta}_{i})$, see \Cref{f.chap}. Hence, the line ${\rm Re}(z_{2})=y$ is not covered a.s.~. An analogous result holds by sending $(\Theta_{1},\Phi_{1})$ via a Cayley transform to $(x,\infty)$, for any $x \in \mathbb{R}$ (now the a.s.~uncovered region would be a vertical straight line through $\mathrm{Ste}(\Phi_{i})$). Finally, pass to the unconditional version by recalling that $R_{1}$ is an ${\rm Exp}(1)$ random variable, and the statement follows.

\begin{figure}[!hbtp] 
\centering
\begin{overpic}[width=.45\textwidth]{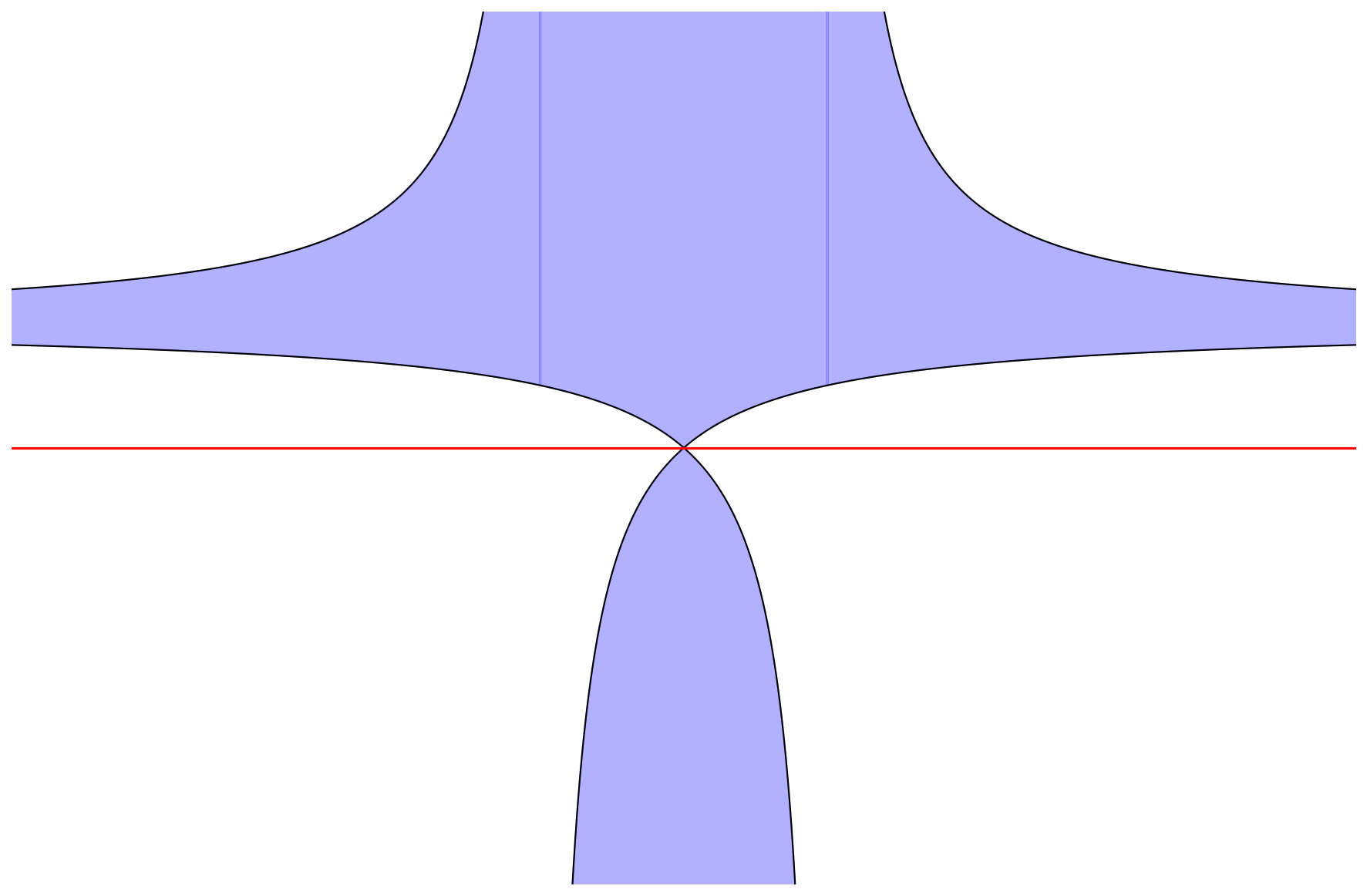}
 \put (-25,32) {{\small{\color{red}${\rm Re}(z_{2})=y$}}}
  \put (49.25,32.18) {{\tiny$\bullet$}}
    \put (44.25,38.18) {\small $\mathrm{Ste}({\Theta}_{i})$}
\end{overpic}
\caption{In blue, the region at smaller separation to $(\Theta_{i},\Phi_{i},R_{i})$ than to $(\infty,y,R_{1})$. The boundary of this region, represented here in black, is $\mathcal{P}\cap {\rm NML}((\infty,y,R_{1}),(\Theta_{i},\Phi_{i},R_{i}))$ (see \Cref{cor.nmlinfinf}).}
		\label{f.chap}
\end{figure}

\end{proofof}

\section{The landscape seen from a point traveling towards $(\theta_{1},\phi_{2})$}
\label{sec.lsfptt}

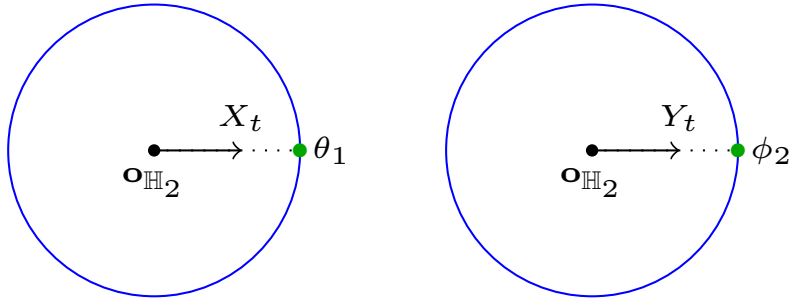
\begin{figure}[!hbtp] 
\centering
\resizebox{.66\linewidth}{!}{\begin{tikzpicture}

% Natural time parametrization with euclidean distance t (value between 0 and 1)
\def\t{0.6} % Change this value to jointly move X_t and Y_t

% First disk with point X_t
\draw[blue] (-1.5,0) circle (1); % Disk 1 centered at (-1.5, 0)
\filldraw[black] (-1.5,0) circle (1pt) node[above right] {}; % Center of disk 1 (label to the left)
\draw[dotted] (-1.5,0) -- (-0.5,0); % Radius from O to (1,0)
\draw[black, ->] (-1.5, 0) -> (-1.5 + \t, 0) node[above] {\tiny $X_t$}; % Small arrowhead for X_t
\draw[green!65!black] (-0.5,0) node {\tiny $\bullet$};
\node[right] at (-0.55, 0) {\tiny$\theta_1$};
\draw[black] (-1.5,-0.22) node{\tiny $\origin_{\mathbb{H}_2}$};

% Second disk with point Y_t
\draw[blue] (1.5,0) circle (1); % Disk 2 centered at (1.5, 0)
\filldraw[black] (1.5,0) circle (1pt) node[above left] {}; 
\draw[dotted] (1.5,0) -- (2.5,0); % Radius from O to (1,0)
\draw[black, ->] (1.5, 0) -> (1.5 + \t, 0) node[above] {\tiny $Y_t$}; % Small arrowhead for Y_t
\draw[green!65!black] (2.5,0) node {\tiny $\bullet$};
\draw[black] (1.5,-0.22) node{\tiny $\origin_{\mathbb{H}_2}$};
\node[right] at (2.45, 0) {\tiny $\phi_2$};
\end{tikzpicture}
}
\caption{The ``double quarter-past-three'' reference system in $\mathbb{B}_{2}\times \mathbb{B}_{2}$ used in this section.
}
		\label{f.refsyst}
\end{figure}

In this section we work in the model of $\mathcal{M}$ given by the product of Poincaré disks $\mathbb{B}_{2}\times \mathbb{B}_{2}$  conditionally on $(\Theta_{1},\Phi_{1})=(0,\Phi_{1})$ and $(\Theta_{2},\Phi_{2})=(\Theta_{2},0)$ (see \Cref{f.refsyst}).

\medskip
\begin{proofof}{\Cref{thm.2}}For $t\geq 0$, let $\rho_{\rm E}(t) \coloneqq\tanh{\frac{t}{2}}$ and consider the point $(X_{t},Y_{t})=\rho_{\rm E}(t)\cdot(1,1)$ traveling towards the point $(1,1) \in \partial\mathbb{B}_{2}\times \partial \mathbb{B}_{2}$ as $t \to \infty$. We first derive a formula for the separation of $(X_{t},Y_{t})$ to any corona point $(\theta,\phi,r)$. For any $\chi=\rho e^{i \tau}\in \mathbb{B}_{2}$ and $\psi \in \partial \mathbb{B}_{2}$, use the following expression for the hyperbolic Poisson kernel $K(\chi,\psi)=\frac{1-\rho^{2}}{1+\rho^{2}-2 \rho\cos{(\tau -\psi)}}$ in each factor of \Cref{sep.comp}. This gives
\begin{equation}
\label{eq.fol}
\bud\bigl((X_{t},Y_{t}), (\theta,\phi, r)\bigr) = r \cdot \left[f_1(\eps)+2\cdot f_2(\eps) (2-\cos\theta-\cos{\phi})+4\cdot f_3(\eps) (1-\cos\theta)(1-\cos{\phi}) \right]
\end{equation}
\noindent
where  $\varepsilon \coloneqq 1 - \rho_{\rm E}(t)$ and $f_1(\eps)  \coloneqq \left(\frac{\eps}{2-\eps}\right)^{2}$, $f_2(\eps)  \coloneqq \frac{1-\eps}{(2-\eps)^{2}}$ and $f_3(\eps)  \coloneqq\frac{1-\eps}{2\eps^{2}(2-\eps)^{2}}$ are three strictly positive functions over $[0,1)$. It follows that $\forall \eps >0$, there exists $\epsilon>0$ and an $\mathcal{M}$-ball $B_{\delta}(X_{t},Y_{t})$ centered at $(X_{t},Y_{t})$ of radius $\delta>0$ s.t.~the following event 
$$
A_{\eps,\delta}=\{B_{\delta}(X_{t},Y_{t}) \cap \text{NML}((0,\Phi_{1}, R_{1}),(\Theta_{2},0, R_{2})) \; \text{is not empty}\}
$$
holds with probability greater then $\epsilon$. It then follows by Jeulin's Lemma~\cite[Proposition 4]{jeulin1982}\footnote{See~\cite[Proof of Theorem 10.13]{NicoStFlour} for a recent application (and proof) of Jeulin's in the context of random planar maps.} that the set $\text{NML}((0,\Phi_{1}, R_{1}),(\Theta_{2},0, R_{2}))$ is unbounded a.s.~The first item of \Cref{thm.2} follows by passing to the unconditional version and by \Cref{prop.tga}.

\medskip
Before proving the second item of \Cref{thm.2} we derive a preliminary result to build intuition. First, observe that \Cref{eq.fol} provides a foliation of  $\widetilde{\partial \mathcal{M}}$ since $f_{1},f_{2}$ and $f_{3}$ are non-negative functions. Second, $\forall \eps >0$ introduce the rescaled corona process $(\boldsymbol{\hat{\Theta}},\boldsymbol{\hat{\Phi}},\hat{\mathbf{R}})\overset{\rm law} {=}(\eps \boldsymbol{\hat{\Theta}},\eps\boldsymbol{\hat{\Phi}}, \eps^{-2}\hat{\mathbf{R}})\bigr)$. Then
$$
\tilde{\mathcal{S}}^{{(\eps)}}_{t}(\origin)\coloneqq\bud\bigl((X_{t},Y_{t}), (\eps \boldsymbol{\hat{\Theta}},\eps\boldsymbol{\hat{\Phi}}, \eps^{-2}\hat{\mathbf{R}})\bigr)
$$
is a PPP of intensity measure $\tilde{\nu}_{t}$ given by the pushforward of the corona measure $\mu$ (see \Cref{eq.cormes}) via the measurable map in \Cref{eq.fol}. Expanding the cosines in~\Cref{eq.fol} for $\eps$ small (i.e.~$t\to \infty$), $\forall y >0$ the following uniform convergence holds
$$
\tilde{\nu}_{t}([0,y]) \coloneqq \mu([0,\bud^{-1}\{y\}])= 4 y \int_{[-\pi/\eps,\pi/\eps]^{2}} \frac{1}{1+\hat{\theta}^{2}+\hat{\phi}^{2}+\hat{\theta}^{2} \hat{\phi}^{2}+O(\eps^{2})}\, \mathrm{d}\hat{\theta} \mathrm{d}\hat{\phi} \underset{\substack{t \to \infty\\ (\eps \downarrow 0)}}{\longrightarrow}4 \pi^{2} \, y \coloneqq \tilde{\nu}_{\infty}([0,y]) .
$$
Proceeding along the same lines, we get $\tilde{\mathcal{S}}^{(\eps)}_{t}(\origin)\underset{t \to \infty}{\overset{\rm law}{\Longrightarrow}}\tilde{\mathcal{S}}_{\infty}(\origin)$, where $\tilde{\mathcal{S}}_{\infty}(\origin)$ is a stationary PPP over $\mathbb{R}\times \mathbb{R} \times  \mathbb{R}_{\geq 0}$ of intensity measure $\tilde{\nu}_{\infty}=4\pi^{2} \cdot \frac{1}{1+\hat{\theta}^{2}} \;\mathrm{d}\hat{\theta} \; \frac{1}{1+\hat{\phi}^{2}} \,\mathrm{d}\hat{\phi} \,\II{y\geq 0} \mathrm{d}y$ (see \Cref{fig.rain} for a portrait). We now repeat the above argument and get for suitable $z=(z_{1},z_{2}) \in \mathcal{M}$ the convergence in law of the following separation process seen from $z$
$$
\tilde{\mathcal{S}}^{{(\eps)}}_{t}(z)\coloneqq \bud\bigl(z, (\eps \boldsymbol{\hat{\Theta}},\eps\boldsymbol{\hat{\Phi}}, \eps^{-2}\hat{\mathbf{R}})\bigr) \; .
$$
Let $g=(\varphi,\varphi)\in {\rm Isom}_{1}$ be the unique isometry fixing $(1,1)\in \partial\mathbb{B}_{2}\times \partial \mathbb{B}_{2}$ s.t.~$(\varphi(X_{t}),\varphi(Y_{t}))=\origin$. By \Cref{prop.tga},

$$
\tilde{\mathcal{S}}^{{(\eps)}}_{t}(z) \overset{\rm law}{=}\tilde{\mathcal{S}}^{{(\eps)}}_{t}(\origin) \cdot \frac{1}{K(\varphi(z_{1}),\varphi(\eps \boldsymbol{\hat{\Theta}}))}\cdot \frac{1}{K(\varphi(z_{2}),\varphi(\eps \boldsymbol{\hat{\Phi}}))} \; .
$$

\smallskip
\noindent
Use that $\forall \varphi \in \mob_{2}, \forall z \in \mathbb{B}_{2}$ and $\theta \in \partial \mathbb{B}_{2}$ (see \emph{e.g.}~\cite[page 8]{beardon2012geometry}) the hyperbolic Poisson kernel satisfies
$$
K(\varphi(z),\varphi(\theta))=\frac{K(z,\theta)}{|\varphi'(\theta)|},
$$
and put $z=(1-\eps \eta, 1-\eps \xi)$ for $(\eta,\xi)\in \mathcal{M}$ s.t.~$\Re(\eta)\geq \frac{1}{2}\eps |\eta|^{2}$ and $\Re(\xi)\geq \frac{1}{2}\eps |\xi|^{2}$. After some tedious but elementary calculation
$$
\tilde{\mathcal{S}}^{{(\eps)}}_{t}(1-\eps \eta, 1-\eps \xi) \underset{t \to \infty}{\overset{\rm law}{\Longrightarrow}} S_{\infty}(\eta,\xi)
$$
where the intensity measure of $S_{\infty}(\eta,\xi)$ is given in the statement.
\end{proofof}
\medskip
\begin{proofof}{\Cref{cor.xuyv}}
Plug either $(\theta,\phi)=(\Theta_{1},\Phi_{1})$ or $(\theta,\phi)=(\Theta_{2},\Phi_{2})$ in \Cref{eq.fol}. Then the coefficient of $f_{2}$ is $1-\cos{(\Phi_{1})}$, respectively $1-\cos{(\Theta_{2})}$, and in both cases the coefficient of $f_{3}$ vanishes identically. Thus the corresponding separations converge in law to $\bud\bigl((\Theta_{1},\Phi_{2}),(\Theta_{1},\Phi_{1},R_{1})\bigr)$, respectively $\bud\bigl((\Theta_{1},\Phi_{2}),(\Theta_{2},\Phi_{2},R_{2})\bigr)$, when {$t \to \infty$}. The event $(\Theta_{1},\Phi_{2})\in \mathcal{C}_{2,2}$ corresponds to the event that $\frac{\bud\bigl((\Theta_{1},\Phi_{2}),(\Theta_{1},\Phi_{1},R_{1})\bigr)}{\bud\bigl((\Theta_{1},\Phi_{2}),(\Theta_{2},\Phi_{2},R_{2})\bigr)}\leq 1$. The rest is straightforward computation using the two following well-known facts:
\begin{itemize}
\item If $X$, $Y$ are independent ${\rm Exp}(1)$ random variables, then $\frac{X}{X+Y}$ is a ${\rm Unif}\left([0,1]\right)$ random variable;
\item If $U$ is a ${\rm Unif}\left([-\pi,\pi]\right)$ random variable, then $\frac{1}{2}(1- \cos{(U)})$ is a ${\rm Beta}\left(\frac{1}{2},\frac{1}{2}\right)$ random variable.
 \end{itemize}
\end{proofof}

\bibliographystyle{alpha}

\bibliography{biblio}

\end{document}